\def\sqr#1#2{{\vcenter{\vbox{\hrule height.#2pt
              \hbox{\vrule width.#2pt height#1pt \kern#1pt \vrule width.#2pt}
          \hrule height.#2pt}}}}
\newcommand{\norm}[1]{\left\Vert#1\right\Vert}
\newcommand{\abs}[1]{\left\vert#1\right\vert}
\def\sqr#1#2{{\vcenter{\vbox{\hrule height.#2pt
              \hbox{\vrule width.#2pt height#1pt \kern#1pt \vrule width.#2pt}
              \hrule height.#2pt}}}}
\def\3n{\negthinspace \negthinspace \negthinspace }
\def\2n{\negthinspace \negthinspace }
\def\1n{\negthinspace }
\def\={\buildrel \triangle \over =}
\def\max{\mathop{\rm max}}
\def\min{\mathop{\rm min}}
\def\({\Big (}
\def\){\Big )}
\def\[{\Big[}
\def\]{\Big]}
\def\be{\begin{equation}}
\def\ee{\end{equation}}
\def\square#1{\vbox{\hrule\hbox{\vrule height#1%
     \kern#1\vrule}\hrule}}
\def\rectangle#1#2{\vbox{\hrule\hbox{\vrule height#1%
     \kern#2\vrule}\hrule}}
\newenvironment{eqs}{\equation\aligned}{\endaligned\endequation}
\newenvironment{eqc}{\equation\begin{cases}}{\end{cases}\endequation}
\providecommand{\norm}[1]{{\left\lVert#1\right\rVert}}
\providecommand{\abs}[1]{{\left\lvert#1\right\rvert}}
\providecommand{\pr}[1]{{\left(#1\right)}}
\providecommand{\pp}[1]{{\left[#1\right]}}
\providecommand{\set}[1]{{\left\lbrace#1\right\rbrace}}
\font\tenbb=msbm10 \font\sevenbb=msbm7 \font\fivebb=msbm5
\theoremstyle{definition}
\newtheorem{lemma}{Lemma}[section]
\newtheorem{remark}{Remark}[section]
\newtheorem{example}{Example}[section]
\newtheorem{theorem}{Theorem}[section]
\newtheorem{definition}{Definition}[section]
\newtheorem{prop}{Proposition}[section]
\newtheorem{ass}{Assumption}[section]
\begin{document}

\title{Controllability concepts for mean-field dynamics with reduced-rank coefficients\thanks{This work was supported by the NSF of Shandong Province (NO. ZR2023ZD35), the NSF of P.R. China (NO. 12031009), National Key R and D Program of China (NO. 2018YFA0703900).}}
\author{Dan Goreac$^{1,3,4}$,\,\, Juan Li$^{1,2,}$\footnote{Corresponding authors.},\,\, Xinru Zhang$^{1,\dag}$ \\
{$^1$\small School of Mathematics and Statistics, Shandong University, Weihai, Weihai 264209, P.~R.~China.}\\
{$^2$\small Research Center for Mathematics and Interdisciplinary Sciences, Shandong University,}\\
{\small Qingdao 266237, P.~R.~China.}\\
{$^3$\small \'{E}cole d'actuariat, Universit\'{e} Laval, Qu\'{e}bec, QC, Canada.}\\
{$^4$\small LAMA, Univ Gustave Eiffel, UPEM, Univ Paris Est Creteil, CNRS, F-77447, Marne-la-Vall\'ee, France.}\\
{\footnotesize{\it E-mails: dan.goreac@univ-eiffel.fr,\,\ juanli@sdu.edu.cn,\,\ xinruzhang@mail.sdu.edu.cn.}}
}
\date{\today}
\maketitle

\textbf{Abstract}. In this paper we explore several novel notions of exact controllability for mean-field linear controlled stochastic differential equations (SDEs). A key feature of our study is that the noise coefficient is not required to be of full rank. We begin by demonstrating that classical exact controllability with $\mathbb{L}^2$-controls necessarily requires both rank conditions on the noise introduced in \cite{goreac2014controllability} and subsequent works. When these rank conditions are not satisfied, we introduce alternative rank requirements on the drift, which enable exact controllability by relaxing the regularity of the controls. In cases where both the aforementioned rank conditions fail, we propose and characterize a new notion of exact terminal controllability to normal laws (ETCNL). Additionally, we investigate a new class of Wasserstein-set-valued backward SDEs that arise naturally associated to ETCNL.

\textbf{Keywords}. Exact controllability, mean-field SDEs, rank conditions, exact controllability in law, Wasserstein set-valued BSDEs.

\section{Introduction}
\indent Let $(\Omega,\mathcal{F},\mathbb{P})$ be a probability space endowed with a scalar standard Brownian motion $W(\cdot)$ and its natural filtration $\mathbb{F}=\{\mathcal{F}_{t}\}_{t\geq0}$ augmented by all $\mathbb{P}$-null sets. We consider the following mean-field linear controlled stochastic differential equation (MFSDE, for short):
\begin{eqs}\label{SDE0}
&dX(t)=\pr{A_1^0X(t)+A_2^0\mathbb{E}\pp{X(t)}+B_1u(t)+B_2\mathbb{E}\pp{u(t)}}dt\\&\indent \quad \quad +\pr{C_1^0X(t)+C_2^0\mathbb{E}\pp{X(t)}+D_1u(t)+D_2\mathbb{E}\pp{u(t)}}dW(t),
\end{eqs}
where $A_j^0,\ C_j^0\in\mathbb{R}^{d\times d}$ and $B_j,\ D_j\in\mathbb{R}^{d\times n}$ for every index $j\in\set{1,2}$. Here, $\mathbb{R}^{d\times d}$ and $\mathbb{R}^{d\times n}$ are the sets of all $(d\times d)$ and $(d\times n)$ real matrices, respectively.
In the above, the state space is a standard Euclidean one $\mathbb{R}^d$, with $d\in\mathbb{N}^*$, while the controls are $n$-dimensional, for some $n\in\mathbb{N}^*$. The concept of exact controllability describes the ability of a system to reach any given terminal point from an arbitrary initial point accurately, and the precise definition will be given in the next section.\\
\indent Referring strictly to this kind of mean-field systems, exact controllability with $\mathbb{L}^2$-controls is shown to require \begin{equation}\label{nec0}
rank(D_1+D_2)=d,
\end{equation} and a sufficient condition is obtained by adjoining
\begin{equation}\label{suf0}rank(D_1)=d,
\end{equation} cf. \cite{goreac2014controllability}, condition employed in subsequent papers, e.g., \cite[Assumption H]{yu2021controllability}, \cite[Assumption H4]{chen2023exact}.
The standard reasoning is that, given \eqref{suf0}, there exists $M\in \mathbb{R}^{n\times d}$ such that $D_1M=C_1^0$ and the equation \eqref{SDE0} and its qualitative features are equivalent to those of
\begin{eqs}\label{SDE1}
&dX(t)=\pr{A_1X(t)+A_2\mathbb{E}\pp{X(t)}+B_1u(t)+B_2\mathbb{E}\pp{u(t)}}dt\\&\indent \quad \ \ \ \ +\pr{C\mathbb{E}\pp{X(t)}+D_1u(t)+D_2\mathbb{E}\pp{u(t)}}dW(t),\ t\in[0,T],
\end{eqs}with $A_k=A_k^0-B_kM, \ k\in\set{1,2},\ C=C_2^0-D_2M$. Therefore, in this paper we focus on the controllability of \eqref{SDE1} and give an affirmative answer to the following questions:
\begin{enumerate}
\item[\textbf{Q1.}] Is \eqref{suf0} a necessary condition for the exact controllability of \eqref{SDE1} with $\mathbb{L}^2$-controls?
\item[\textbf{Q2.}] When both $D_1=0=D_2$, can the exact controllability be achieved by lowering the regularity of the controls and imposing rank conditions on $B_1,B_2$?
\item[\textbf{Q3.}] When $D_1=0$ (hence \eqref{suf0} fails to hold), but $B_1$ does not comply with the previous framework, does the condition \eqref{nec0}, i.e., $rank(D_2)=d$ still correspond to a mean-field notion of exact (terminal) controllability? If so, is there a novel notion of backward SDE/inclusion that can be associated to this?
\end{enumerate}
\subsection{State of the art}
\indent Since it deals with targeted behavior of systems governed by ordinary or partial differential dynamics, the controllability is a concept that has been intensively investigated. In the framework of deterministic control systems, the controllability of linear systems is characterized by the controllability Gramian and Kalman's rank condition, respectively. For comprehensive surveys on controllability results for non-random systems, see, e.g., Lee and Markus \cite{lee1967foundations} for ordinary differential equation (ODE, for short) systems and Russell \cite{russell1978controllability}, Lions \cite{lions1988exact} and Zuazua \cite{zuazua2007controllability} for partial differential equation (PDE, for short) systems.\\
\indent Within the extensive literature on stochastic differential equation (SDE), we would like to highlight the following results that are particularly relevant to our current research. Peng \cite{Peng1994backward} initially introduced the concepts of exact terminal controllability and exact controllability in $\mathbb{L}^2$ for stochastic control systems. In the case where the coefficients are deterministic and time-invariant, Peng derived a necessary condition for the terminal controllability of stochastic control systems. Additionally, for linear stochastic control systems, a purely algebraic necessary and sufficient criterion was introduced. Later on, Liu and Peng \cite{liu2010controllability} expanded his research findings to include linear SDE systems with bounded, time-varying deterministic coefficients, leading to the development of a random version of the controllability Gramian. L{\"{u}} and Zhang \cite{lu2021mathematical} simplified Peng's rank condition from infinite number of columns to finite number of columns and derived a stochastic Kalman-type rank condition. In a related study, presented in \cite{lu2012representation}, L{\"{u}}, Yong and Zhang put forth the representation of the It\^{o}'s integral as Lebesgue/Bochner integral, which proved to be a valuable tool. Wang et al. \cite{wang2016exact} introduced the notion of $\mathbb{L}^{p}$-exact controllability for linear controlled stochastic differential equations with random coefficients. The authors then established several sufficient conditions for achieving this exact controllability, as well as deriving several equivalent conditions. On the other hand, Buckdahn, Quincampoix and Tessitore \cite{buckdahn2006characterization} and Goreac \cite{goreac2008kalman, goreac2021improved} delved into the concept of approximate controllability, specifically investigating the existence of admissible controls that steer trajectories arbitrarily close to the terminal conditions. In particular, in \cite{goreac2008kalman} the author showed that approximate and exact controllability are not equivalent.\\
\indent Mean-field stochastic differential equation (MFSDE, for short), also known as the McKean-Vlasov equation, can be traced back to the work of Kac \cite{kac1956foundations, kac1959probability} in the 1950s. With the pioneering works on mean-field stochastic differential games by
Lasry and Lions \cite{lasry2007mean}, the study of mean-field problems has garnered significant attention from researchers, see, e.g., \cite{buckdahn2009mean}, \cite{buckdahn2011general}, \cite{buckdahn2009mean1} as well as the references therein. At the same time, the results on the controllability of McKean-Vlasov SDEs are rather scarce. Goreac \cite{goreac2014controllability} studied some exact and approximate controllability properties for mean-field linear stochastic systems with deterministic time-invariant coefficients, showing that \eqref{nec0} is a necessary condition and, together with \eqref{suf0}, this is a sufficient set of conditions guaranteeing exact controllability. Yu \cite{yu2021controllability} obtained a mean-field version of the Gramian matrix criterion for the general time-variant case and a mean-field version of the Kalman rank condition for the special time-invariant case, also in the deterministic framework. Ye and Yu \cite{ye2022exact} were interested in the exact controllability of linear stochastic systems (but with random time-variant coefficients) and obtained the equivalence for a series of problems. Recently, Chen and Yu \cite{chen2023exact} considered the exact controllability of a linear mean-field type game-based control system generated by a linear-quadratic Nash game. In an effort to deal with the exact controllability, the aforementioned papers require the conditions \eqref{nec0} and \eqref{suf0} or some variations.
\subsection{Structure of the paper and main contributions}
\indent The paper is organized as follows. In Section \ref{Sec2}, we provide all the necessary preparations, including some spaces and controllability concepts. Section \ref{Sec3} is devoted to the condition on the noise coefficients. We show the necessity of the full rank of the $D_1$ operator, thus answering \textbf{Q1}.
Section \ref{Sec4} tackles \textbf{Q2} in Theorem \ref{ThMainQ2}. We show that under a suitable condition on the drift coefficients $B_1$, and $B_1+B_2$, even when \eqref{suf0} fails to hold, the system is still exactly controllable albeit the weakening of controls. When both $B_1=D_1=0$, we show that $Rank(D_2)=d$ is a necessary and sufficient condition for a new concept of exact terminal controllability to normal laws (ETCNL), thus answering \textbf{Q3} in Theorem \ref{ThMainQ3}. The novelty of ETCNL with respect to existing controllability notions is explained in Section \ref{Sec5c}.
In the remaining of Section \ref{Sec5}, we focus on the notion of BSDEs with terminal Gaussian law that is naturally associated to ETCNL. In particular, we produce Example \ref{exp2} showing that the solutions need to be Wasserstein set-valued and give a complete characterization in the one-dimensional case in Theorem \ref{BSDE_1}.
\section{Preliminaries}\label{Sec2}
\indent We let $T>0$ be a fixed finite time horizon and $\mathbb{R}^{n\times m}$ denote the set of all $(n\times m)$ matrices. The superscript $^*$ denotes the transpose of a vector or a matrix. We introduce the following functional spaces, when the state space $H$ is a Hilbert one.\\
\noindent 1) For $p\in\pp{1,\infty}$, $\mathbb{L}^p_{\mathcal{F}_T}\pr{\Omega;H}$ denotes the space of $\mathcal{F}_T$-measurable, $H$-valued random variables $\xi$ such that $\mathbb{E}\pp{\norm{\xi}^p}<\infty$, for $p<\infty$, and ${\rm esssup}_{\omega\in\Omega}\norm{\xi(\omega)}<\infty$, for $p=+\infty$.\\
\noindent 2) For $p,\ q\in[1,\infty]$,\\
(a) $\mathbb{A}^{p,q}(H):=\mathbb{L}^p_{\mathbb{F}}\pr{\Omega;\mathbb{L}^q\pr{\pp{0,T};H}}$ denotes the family of progressively measurable processes $\phi$ such that $$\norm{\omega\mapsto \norm{\phi(\cdot,\omega)}_{\mathbb{L}^q\pr{\pp{0,T};H}}}_{\mathbb{L}^p(\Omega;\mathbb{R})}<\infty. $$
(b) $\mathbb{B}^{q,p}(H):=\mathbb{L}^q_{\mathbb{F}}\pr{\pp{0,T};\mathbb{L}^p\pr{\Omega;H}}$ denotes the family of progressively measurable processes $\phi$ such that $$\norm{t\mapsto \norm{\phi(t,\cdot)}_{\mathbb{L}^p(\Omega;H)}}_{\mathbb{L}^q(\pp{0,T};\mathbb{R})}<\infty. $$
\noindent 3) For $p\in\pp{1,\infty}$, $u(\cdot)\in\mathcal{U}^{p}$ (resp., $v\in\mathcal{U}^{p}_{r}\pr{\pp{0,T}}$) is a progressively measurable control such that $B_{1}u(\cdot)\in\mathbb{A}^{p,1}(H)$, and $ B_{2}\mathbb{E}\pp{u(\cdot)}\in\mathbb{L}^1\pr{\pp{0,T};H}$ (resp., $B_{1}v(\cdot)\in\mathbb{B}^{1,p}(H)$, and $ B_{2}\mathbb{E}\pp{v(\cdot)}\in\mathbb{L}^1\pr{\pp{0,T};H}$).\\
\noindent 4) For $p=q=2$, we simply write $\mathbb{L}^2_{\mathbb{F}}(H):=\mathbb{L}^2_{\mathbb{F}}\pr{\Omega\times\pp{0,T};H}$.\\
\noindent 5) Finally, for a random variable $\xi\in \mathbb{L}^2_{\mathcal{F}_T}(\Omega;H)$, we let $\mathbb{P}_{\xi}$ denote its law.

The classical controllability concepts for a $d$-dimensional space SDE (see, for instance, \cite{Peng1994backward}) are concerned with $\mathbb{L}^2_{\mathcal{F}_T}\pr{\Omega;\mathbb{R}^d}$-targets and employ $\mathbb{L}^2_{\mathbb{F}}(\mathbb{R}^n)$-regular control policies. The extensions to $\mathbb{L}^p_{\mathcal{F}_T}\pr{\Omega;\mathbb{R}^d}$-targets and different classes of controls are straightforward.
\begin{definition}
1) The system \eqref{SDE1} is said to be $\mathbb{L}^2$-\emph{\textbf{exactly terminal controllable}} if, for every target $\xi\in \mathbb{L}^2_{\mathcal{F}_T}\pr{\Omega;\mathbb{R}^d}$, one is able to find $x\in \mathbb{R}^d$ and an $\mathbb{L}^2_{\mathbb{F}}\pr{\mathbb{R}^n}$-regular control $u$ such that the solution $X^{x,u}$ of \eqref{SDE1} starting at $x$ and controlled with $u$ satisfies $X^{x,u}(T)=\xi,\ \mathbb{P}$-a.s.;\\
\indent 2) The system \eqref{SDE1} is said to be $\mathbb{L}^2$-\emph{\textbf{exactly controllable}} if, for every target $\xi\in \mathbb{L}^2_{\mathcal{F}_T}\pr{\Omega;\mathbb{R}^d}$ and every $x\in \mathbb{R}^d$, there is an $\mathbb{L}^2_{\mathbb{F}}\pr{\mathbb{R}^n}$-regular control $u$ such that the solution $X^{x,u}$ of \eqref{SDE1} starting at $x$ and controlled by $u$ satisfies $X^{x,u}(T)=\xi,\ \mathbb{P}$-a.s.;\\
\indent 3) The system \eqref{SDE1} is said to be $\mathbb{L}^2$-\emph{\textbf{exactly terminal null-controllable}} (resp. \emph{\textbf{exactly null-controllable}}) if the aforementioned properties are valid for the target $\xi=0$;\\
\indent 4) The system \eqref{SDE1} is said to be $\mathbb{L}^2$-\emph{\textbf{approximately controllable}} if, for every $\varepsilon>0$, for every target $\xi\in \mathbb{L}^2_{\mathcal{F}_T}\pr{\Omega;\mathbb{R}^d}$, and every $x\in \mathbb{R}^d$, there exists an $\mathbb{L}^2_{\mathbb{F}}\pr{\mathbb{R}^n}$-regular control $u$, such that $\mathbb{E}\pp{\abs{X^{x,u}(T)-\xi}^2}\leq \varepsilon$.
\end{definition}
\begin{remark}
1) The notions of $\mathbb{L}^2$-controllability involve $\mathbb{L}^2_{\mathbb{F}}\pr{\mathbb{R}^n}$-regular controls and this regularity of the controls is not specified in the statements, unless we particularly want to emphasize that the proof specifically relies on this regularity;\\
\indent 2) In the same spirit, one can consider notions of $\mathbb{L}^p$-controllability with controls $u\in \mathcal{A}$ (a generic class of controls). For instance, if for every initial state $x\in \mathbb{R}^d$, and every terminal target $\xi\in \mathbb{L}^p_{\mathcal{F}_T}\pr{\Omega;\mathbb{R}^d}$, there exists a control $u\in \mathcal{U}^{p}_{r}$ such that the solution $X^{x,u}$ of \eqref{SDE1} satisfies $X^{x,u}(T)=\xi,\ \mathbb{P}$-a.s., then the system is said to be exactly controllable to $\mathbb{L}^p_{\mathcal{F}_T}\pr{\Omega;\mathbb{R}^d}$-targets using $ \mathcal{U}^{p}_{r}$-controls.
\end{remark}
\section{\textbf{Q1.} Necessity of \eqref{suf0}}\label{Sec3}
This section is presented as a complement to \cite{goreac2014controllability} and provides an affirmative answer to \textbf{Q1}. Building on the work in  \cite[inequality (8)]{Peng1994backward}, which relies on a deterministic oscillating function, it has been established in \cite{goreac2014controllability} that $\mathbb{L}^2$-exact terminal controllability cannot be achieved unless $D_1+D_2$ covers the entire state space. When this necessary condition is complemented with the requirement that $D_1$ can span the state space, the two conditions together imply the exact terminal controllability. In other words, when $D_1$ is of full rank, the forward controlled equation \eqref{SDE1} can be interpreted as a classical BSDE, which is solvable. This condition has subsequently been utilized in several follow-up papers, e.g., \cite[Assumption H]{yu2021controllability}, \cite[Assumption H4]{chen2023exact}.
\begin{theorem}\label{PropNec}
Assume that $Range (C)\subset Range (D_1)$. Then \eqref{SDE1} is $\mathbb{L}^2$-exactly terminal controllable if and only if $Rank(D_1+D_2)=Rank(D_1)=d$.
\end{theorem}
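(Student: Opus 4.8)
The plan is to treat the two implications separately: the backward one is essentially the reduction to a BSDE from \cite{goreac2014controllability}, while the forward one -- in particular the necessity of $Rank(D_1)=d$, which is the content of \textbf{Q1} -- requires a variant of Peng's oscillating-function argument adapted to the mean-field adjoint system.

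For the sufficiency, assume $Rank(D_1)=Rank(D_1+D_2)=d$; then $Range(D_1)=\mathbb{R}^d\supseteq Range(C)$, so the standing hypothesis is automatic. Fixing right inverses $D_1^+$, $(D_1+D_2)^+$ and a target $\xi\in\mathbb{L}^2_{\mathcal{F}_T}(\Omega;\mathbb{R}^d)$, one solves the linear mean-field BSDE $dX=\big(A_1X+A_2\mathbb{E}[X]+B_1u+B_2\mathbb{E}[u]\big)dt+Z\,dW$, $X(T)=\xi$, where $u=u(X,Z):=D_1^+\big(Z-\mathbb{E}[Z]\big)+(D_1+D_2)^+\big(\mathbb{E}[Z]-C\mathbb{E}[X]\big)$ is the solution of $C\mathbb{E}[X]+D_1u+D_2\mathbb{E}[u]=Z$. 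Being linear and Lipschitz, this mean-field BSDE has a unique solution $(X,Z)$ in $\mathbb{L}^2_{\mathbb{F}}$; then $u$ is an $\mathbb{L}^2_{\mathbb{F}}(\mathbb{R}^n)$-regular control and, by construction, $X$ solves \eqref{SDE1} driven by $u$ from $X(0)$, so $X^{X(0),u}(T)=\xi$. Hence \eqref{SDE1} is $\mathbb{L}^2$-exactly terminal controllable; this is the argument of \cite{goreac2014controllability}.

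For the necessity, assume \eqref{SDE1} is $\mathbb{L}^2$-exactly terminal controllable. The condition $Rank(D_1+D_2)=d$ is \eqref{nec0}, whose necessity is proved in \cite{goreac2014controllability} by inserting a deterministic oscillating function into the expectation equation, after \cite[inequality (8)]{Peng1994backward}; I would recall this verbatim. For $Rank(D_1)=d$ I would argue by duality: exact terminal controllability means surjectivity of the bounded operator $\mathcal{L}\colon(x,u)\mapsto X^{x,u}(T)$ from $\mathbb{R}^d\times\mathbb{L}^2_{\mathbb{F}}(\mathbb{R}^n)$ onto $\mathbb{L}^2_{\mathcal{F}_T}(\Omega;\mathbb{R}^d)$, equivalently (open mapping theorem) the observability bound $|\mathbb{E}[p(0)]|^2+\mathbb{E}\int_0^T\big|B_1^*p+B_2^*\mathbb{E}[p]+D_1^*q+D_2^*\mathbb{E}[q]\big|^2dt\ge c^2\,\mathbb{E}[|\eta|^2]$ for all $\eta$ and some $c>0$, where $(p,q)$ solves the mean-field adjoint BSDE $dp=-\big(A_1^*p+A_2^*\mathbb{E}[p]+C^*\mathbb{E}[q]\big)dt+q\,dW$, $p(T)=\eta$. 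Suppose $Rank(D_1)<d$; using $Range(C)\subseteq Range(D_1)$, pick $v\ne0$ with $D_1^*v=0$ and $C^*v=0$. For large $k$ put $S_k:=T-1/k$ and take the \emph{sign-randomized} targets $\eta_k:=v\sqrt{k}\int_{S_k}^T\mathrm{sign}\big(W(s)-W(S_k)\big)dW(s)$, so that $\|\eta_k\|^2=|v|^2$ while the martingale-representation integrand $n_k$ of $\eta_k$ is $v$-directed, supported on $[S_k,T]$, and mean-zero, $\mathbb{E}[n_k(t)]\equiv0$. As the coefficients are deterministic and time-invariant, the fluctuation part of the adjoint decouples from its mean, giving explicitly $\mathbb{E}[p_k]\equiv\mathbb{E}[q_k]\equiv0$, $q_k(t)=e^{A_1^*(T-t)}n_k(t)$, $p_k(t)=e^{A_1^*(T-t)}\mathbb{E}[\eta_k\mid\mathcal{F}_t]$ (so $p_k\equiv0$ on $[0,S_k]$). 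Hence $\mathbb{E}[p_k(0)]=0$, $B_2^*\mathbb{E}[p_k]=D_2^*\mathbb{E}[q_k]=0$, $\|B_1^*p_k\|^2_{\mathbb{L}^2_{\mathbb{F}}}\le C/k$ by concentration near $T$, and $\|D_1^*q_k\|^2_{\mathbb{L}^2_{\mathbb{F}}}\le C\sup_{[S_k,T]}|D_1^*e^{A_1^*(T-t)}v|^2\le C/k^2$ since $D_1^*v=0$. Thus the left-hand side of the observability bound $\to0$ while $\|\eta_k\|^2=|v|^2$ is fixed -- a contradiction, so $Rank(D_1)=d$.

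The main obstacle is this last step. There are two delicate points. First, the mean-field adjoint BSDE and its duality pairing must be written down correctly, carrying the $\mathbb{E}[p]$ and $\mathbb{E}[q]$ terms. Second -- the real novelty -- the classical choice of targets with a \emph{deterministic} oscillating representation integrand does not close the argument here: it leaves $\|D_2^*\mathbb{E}[q_k]\|$ bounded away from zero, so the observability constant fails to degenerate. The remedy is the sign-randomization above, which imposes $\mathbb{E}[n_k]\equiv0$, kills the $\mathbb{E}[p_k]$ and $\mathbb{E}[q_k]$ contributions, and keeps $\|\eta_k\|$ of unit order; after that, estimating $B_1^*p_k$ and $D_1^*q_k$ is routine in the time-invariant deterministic-coefficient setting considered here.
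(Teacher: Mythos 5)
Your proposal is correct, and the sufficiency and the necessity of $Rank(D_1+D_2)=d$ are delegated to \cite{goreac2014controllability} exactly as in the paper; but for the key step, the necessity of $Rank(D_1)=d$, you take a genuinely different route. The paper argues directly on the forward equation: it fixes a unit vector $a$ with $a^*D_1=0$ and the single second-chaos target $\xi=\frac{W(T)^2-T}{\sqrt{2T}}\,a$, projects the controlled dynamics onto $a$ (so the surviving diffusion term $a^*\big(C\mathbb{E}[X]+D_2\mathbb{E}[u]\big)$ is \emph{deterministic}), and then a conditional-variance lower bound for $\int_t^T\sqrt{2/T}\,W(s)\,dW(s)$ against deterministic integrands, combined with Cauchy--Schwarz, forces $\mathbb{E}\int_t^T|b(s)|^2ds\ge 1$ for every $t<T$, contradicting square integrability as $t\uparrow T$. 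You instead dualize: surjectivity of $(x,u)\mapsto X^{x,u}(T)$ is converted into an observability inequality for the mean-field adjoint BSDE, which you violate with the sign-randomized targets $\eta_k$ supported on $[T-1/k,T]$; the sign-randomization (giving $\mathbb{E}[n_k]\equiv 0$, hence $\mathbb{E}[p_k]\equiv\mathbb{E}[q_k]\equiv 0$) plays exactly the role that the determinism of $a^*\big(C\mathbb{E}[X]+D_2\mathbb{E}[u]\big)$ plays in the paper, namely neutralizing the mean-field terms $B_2^*\mathbb{E}[p]$, $D_2^*\mathbb{E}[q]$, $C^*\mathbb{E}[q]$. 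I checked your explicit adjoint solution $q_k(t)=e^{A_1^*(T-t)}n_k(t)$, $p_k(t)=e^{A_1^*(T-t)}\mathbb{E}[\eta_k\mid\mathcal{F}_t]$ (valid by uniqueness of the linear mean-field BSDE once the mean terms vanish) and the estimates $\|B_1^*p_k\|^2\le C/k$, $\|D_1^*q_k\|^2\le C/k^2$; they are sound, so the observability constant indeed degenerates while $\|\eta_k\|$ stays fixed. One small remark: the condition $C^*v=0$ you extract from $Range(C)\subset Range(D_1)$ is never actually used, since $\mathbb{E}[q_k]\equiv 0$ already kills the $C^*\mathbb{E}[q]$ term (the paper's direct proof likewise does not need the range assumption for the necessity half). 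What your approach buys is a reusable duality/observability criterion and a clear diagnosis of why deterministic oscillating targets fail in the mean-field setting; what the paper's buys is brevity: a single explicit target, no functional-analytic machinery, and a two-line variance inequality in place of the adjoint analysis.
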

\begin{proof}
We begin our proof with a simple remark. Let $\alpha$ be a real-valued process on $\Omega\times\pp{0,T}$. Then, for every Borel function $r:\pp{0,T}\longrightarrow\mathbb{R}$, we have
$$\mathbb{E}\pp{\int_t^T\abs{\alpha_s-r_s}^2ds}\geq \int_t^TVar\pr{\alpha_s}ds.$$In particular, for $0\leq t\leq T$,
\begin{equation}\label{eq_L1}\mathbb{E}\pp{\int_t^T\abs{\frac{\sqrt{2}}{\sqrt{T}}W(s)-r_s}^2ds}\geq \frac{T^2-t^2}{T}\geq T-t.
\end{equation}
We only prove the necessity of the condition on $D_1$, the sufficiency and the rank condition on $D_1+D_2$ already appearing in \cite[the Propositions 2 and 5]{goreac2014controllability}. We proceed by contradiction and assume the range of $D_1$ does not cover all $\mathbb{R}^d$. Then, there exists some $a\in\mathbb{R}^d$, such that $\abs{a}=1$ and $a^*D_1=0$. Under the assumption of exact terminal controllability, there exists some control $u$ such that the associated trajectory satisfies
$$X(T)=\frac{W(T)^2-T}{\sqrt{2T}}a=\pr{\int_0^T\sqrt{\frac{2}{T}}W(s)dW(s)}a.$$
We have
\begin{align*}
&\int_0^T\sqrt{\frac{2}{T}}W(s)dW(s)=a^*x+\int_0^Ta^*\pr{A_1X(s)+A_2\mathbb{E}\pp{X(s)}+B_1u(s)+B_2\mathbb{E}\pp{u(s)}}ds\\
&\indent \quad \hspace{2.8cm}+\int_0^Ta^*\pr{C\mathbb{E}\pp{X(s)}+D_2\mathbb{E}\pp{u(s)}}dW(s).
\end{align*}
By taking the conditional expectation $\mathbb{E}\pp{\ \cdot \mid \mathcal{F}_t}$, we get
\begin{align*}
&\int_t^T\pp{\sqrt{\frac{2}{T}}W(s)- a^*\pr{C\mathbb{E}\pp{X(s)}+D_2\mathbb{E}\pp{u(s)}}}dW(s)\\
&=\int_t^Tb(s)ds-\mathbb{E}\pp{\int_t^Tb(s)ds \Big| \mathcal{F}_t},\ t\in[0,T],
\end{align*}
where
$b(s):=a^*\Big(A_1X(s)+A_2\mathbb{E}\pp{X(s)}+B_1u(s)+B_2\mathbb{E}\pp{u(s)}\Big),\ s\in[0,T]$.
By using the first remark in our proof, more precisely, \eqref{eq_L1}, it follows that
$$\mathbb{E}\pp{\pr{\int_t^Tb(s)ds-\mathbb{E}\pp{\int_t^Tb(s)ds \Big| \mathcal{F}_t}}^2}\geq T-t,$$for every $0\leq t\leq T$. By Cauchy's inequality, the left-hand term is dominated by $(T-t)\mathbb{E}\pp{\displaystyle\int_t^T\abs{b(s)}^2ds}$. Therefore, we have $\mathbb{E}\pp{\displaystyle\int_t^T\abs{b(s)}^2ds}\geq 1$, for every $0\leq t< T$. In virtue of the square integrability of $b$, we get immediately a contradiction by taking the limit as $t\uparrow T$. We conclude that $D_1$ must have a full $d$ rank.
\end{proof}

The result in Theorem \ref{PropNec} is, of course, very strong as it states that one still needs access to all the components (via $D_1$) and that $D_2$ does not bring anything new (it is obvious that the system is exactly terminal controllable with $D_2=0$). On the contrary, $D_2$ has to ``play nice" and not affect the spectrum of $D_1$ (this is the meaning of $D_1+D_2$ being of full rank).
\section{\textbf{Q2.} Exact controllability with controls of lower regularity}\label{Sec4}
We recall the following interesting result from \cite[Theorem 3.1]{lu2012representation}.
\begin{lemma}\label{representation}
Let $H$ be a Hilbert space, $p\geq1$. Then, for every $T>0$ and every $\xi\in\mathbb{L}^p_{\mathcal{F}_T}\pr{\Omega;H}$, there exists $\zeta\in\mathbb{L}^1_{\mathbb{F}}\pr{\pp{0,T};\mathbb{L}^p\pr{\Omega;H}}$ such that $\displaystyle\int_0^T\zeta(r)\ dr=\xi$, $\mathbb{P}$-a.s.
\end{lemma}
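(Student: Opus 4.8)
The goal is to represent an arbitrary $\mathcal{F}_T$-measurable random variable $\xi\in\mathbb{L}^p_{\mathcal{F}_T}(\Omega;H)$ as a Bochner integral $\int_0^T\zeta(r)\,dr$ over a suitably integrable, progressively measurable process $\zeta$. The natural approach is to use the martingale representation / martingale closure machinery on the filtration $\mathbb{F}$: set $M(t):=\mathbb{E}[\xi\mid\mathcal{F}_t]$, which is a (vector-valued) $\mathbb{L}^p$-martingale with $M(0)=\mathbb{E}[\xi]$ deterministic and $M(T)=\xi$. The idea is then to realize the ``increment'' $M(T)-M(0)$ as a time integral of a process that carries the martingale's fluctuations, plus a deterministic correction absorbing $M(0)$. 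Concretely, I would look for $\zeta$ so that $\int_0^t\zeta(r)\,dr$ mimics $M(t)$ along a mesh of times, in the spirit of approximating the martingale by its conditional-expectation values on a partition and then letting the mesh vanish; the $\mathbb{L}^1_{\mathbb{F}}([0,T];\mathbb{L}^p(\Omega;H))$-norm of $\zeta$ should be controlled by something like $\mathbb{E}[\norm{\xi}^p]^{1/p}$ up to constants.

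The cleanest concrete construction I would attempt: partition $[0,T]$ and define $\zeta$ piecewise so that on the $k$-th subinterval $[t_{k-1},t_k)$ the process equals $\frac{1}{t_k-t_{k-1}}\bigl(\mathbb{E}[\xi\mid\mathcal{F}_{t_{k-1}}]-\mathbb{E}[\xi\mid\mathcal{F}_{t_{k-2}}]\bigr)$ (with the convention that the first block carries $\mathbb{E}[\xi\mid\mathcal{F}_{t_0}]=\mathbb{E}[\xi]$, which is deterministic and hence trivially integrable). A telescoping sum then gives $\int_0^T\zeta(r)\,dr=\mathbb{E}[\xi\mid\mathcal{F}_{t_{N-1}}]$, which is not yet $\xi$; to fix this one must pass to the limit over a refining sequence of partitions, using that $\mathbb{E}[\xi\mid\mathcal{F}_{t}]\to\xi$ in $\mathbb{L}^p$ as $t\uparrow T$ (right-continuity of the filtration and martingale convergence). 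So the actual $\zeta$ should be built as a series: allocate the $k$-th martingale increment $M(s_k)-M(s_{k-1})$ (for a sequence $s_k\uparrow T$) to be spread uniformly over a shrinking subinterval accumulating at $T$, i.e. $\zeta(r)=\sum_k \frac{M(s_k)-M(s_{k-1})}{\ell_k}\mathbf{1}_{I_k}(r)$ with $\sum_k\ell_k=T$ and $I_k$ a partition of $[0,T)$. Then $\int_0^T\zeta = \lim_k M(s_k)=\xi$ in $\mathbb{L}^p$, provided the tail is summable.

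The main obstacle is integrability at the terminal time: one must choose the interval lengths $\ell_k$ small enough (relative to the $\mathbb{L}^p$-size of the increments $M(s_k)-M(s_{k-1})$) that $\sum_k \ell_k \cdot \frac{\norm{M(s_k)-M(s_{k-1})}_{\mathbb{L}^p}}{\ell_k}=\sum_k\norm{M(s_k)-M(s_{k-1})}_{\mathbb{L}^p}$ is finite, while still keeping $\sum_k\ell_k=T$. Martingale convergence guarantees $\norm{M(s_k)-M(s_{k-1})}_{\mathbb{L}^p}\to 0$, but not summability a priori, so one selects the sequence $s_k\uparrow T$ \emph{adaptively}: having fixed $s_{k-1}$, pick $s_k$ close enough to $T$ that $\norm{\xi-M(s_k)}_{\mathbb{L}^p}\le 2^{-k}$, which forces $\norm{M(s_k)-M(s_{k-1})}_{\mathbb{L}^p}\le 2^{-k}+2^{-(k-1)}$, hence summable. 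Then assign $\ell_k$ by splitting $[0,T)$ so that $\sum\ell_k=T$ (any fixed summable-to-$T$ choice works, e.g. $\ell_k=T2^{-k}$, placing the $I_k$ in increasing order of $k$ as one moves toward $T$). Progressive measurability of $\zeta$ is immediate since each increment $M(s_k)-M(s_{k-1})$ is $\mathcal{F}_{s_k}$-measurable and is supported on an interval $I_k$ that can be chosen to lie after time $s_k$; and the $\mathbb{L}^1_{\mathbb{F}}([0,T];\mathbb{L}^p)$-bound follows from the computation above. The remaining details — verifying the Bochner integrability, the convergence of the partial integrals, and that the limit is exactly $\xi$ — are routine applications of dominated convergence and the martingale convergence theorem.
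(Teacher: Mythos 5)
The paper does not prove this lemma at all: it is quoted verbatim from L\"u--Yong--Zhang \cite{lu2012representation} (their Theorem 3.1), so there is no internal proof to compare against. Your construction --- close the martingale $M(t)=\mathbb{E}[\xi\mid\mathcal{F}_t]$, pick times $s_k\uparrow T$ adaptively so that $\norm{\xi-M(s_k)}_{\mathbb{L}^p}\le 2^{-k}$ (hence the increments are summable in $\mathbb{L}^p$), and smear each increment uniformly over an interval lying after the time at which it becomes measurable --- is essentially the argument behind the cited result, and the core of it is sound: adaptedness, the $\mathbb{L}^1_{\mathbb{F}}([0,T];\mathbb{L}^p)$ bound $\sum_k\norm{M(s_k)-M(s_{k-1})}_{\mathbb{L}^p}$, and the telescoping identification of $\int_0^T\zeta\,dr$ with $\lim_k M(s_k)=\xi$ all go through.

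Two points need repair. First, your justification that $\mathbb{E}[\xi\mid\mathcal{F}_t]\to\xi$ in $\mathbb{L}^p$ as $t\uparrow T$ by ``right-continuity of the filtration'' is the wrong property: right-continuity is irrelevant here, and what you actually need is $\mathcal{F}_{T^-}:=\sigma\big(\cup_{t<T}\mathcal{F}_t\big)=\mathcal{F}_T$ up to null sets. This holds for the augmented Brownian filtration of the paper (by path continuity of $W$), and it is precisely where the Brownian structure enters; for a general filtration (e.g.\ $\mathcal{F}_t$ trivial for $t<T$, $\mathcal{F}_T$ rich) the lemma is simply false, since $\int_0^T\zeta\,dr$ would then be deterministic. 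So this step must be stated and used explicitly, not waved at. Second, your concrete allocation is internally inconsistent: you require $I_k$ to lie after $s_k$ (needed for adaptedness, since $M(s_k)-M(s_{k-1})$ is only $\mathcal{F}_{s_k}$-measurable), but you also propose fixing the lengths and positions in advance ($\ell_k=T2^{-k}$, intervals placed in increasing order toward $T$); the adaptively chosen $s_k$ may well exceed the prescribed left endpoint of $I_k$, destroying adaptedness. The fix is immediate and standard: after choosing $s_k$, simply take $I_k=[s_k,s_{k+1})$ (with the deterministic term $\mathbb{E}[\xi]$ spread over $[0,s_1)$), so no separate choice of $\ell_k$ is needed and the constraint $\sum_k\ell_k=T$ is automatic. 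With these two corrections your sketch becomes a complete proof, matching the one in \cite{lu2012representation}.
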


In this section we deal with the following controlled MFSDE, for which we have assumed that $D_1=D_2=0$, thus failing to be covered by the results in the previously cited references.
\begin{equation}\label{x}
\left\{\begin{aligned}
&d X(t)= \left(A_1 X(t)+A_2 \mathbb{E}\pp{X(t)}+B_1 u(t)+B_2 \mathbb{E}\pp{u(t)}\right)dt+C \mathbb{E}\pp{X(t)} d W(t),\ t \geq 0 .\\
&X(0)=x \in \mathbb{R}^d.
\end{aligned}\right.
\end{equation}
With the notation $\bar{Y}:=\begin{pmatrix}
Y \\
y
\end{pmatrix}=\begin{pmatrix}
X-\mathbb{E}\pp{X} \\
\mathbb{E}\pp{X}
\end{pmatrix}$, and $\bar{u}:=\begin{pmatrix}
u-\mathbb{E}\pp{u} \\
\mathbb{E}\pp{u}
\end{pmatrix}$, we have the following equivalent equation
\begin{equation}\label{Y}
\left\{\begin{aligned}
&d\bar{Y}(t)= \pr{\bar{A}\bar{Y}(t)+\bar{B} \bar{u}(t)}dt+\bar{C} \bar{Y}(t)d W(t),\ t \geq 0, \\
&\bar{Y}(0)= \left(\begin{array}{l}
0\\
x
\end{array}\right).\end{aligned}\right.
\end{equation}
To shorten notations, we denote the matrix coefficients in \eqref{Y} by
$$\bar{A}:=\left(\begin{array}{cc}
A_1 & 0 \\
0 & A_1+A_2
\end{array}\right),\ \bar{B}:=\left(\begin{array}{cc}
B_1 & 0 \\
0 & B_1+B_2
\end{array}\right) \text{and}\ \bar{C}:=\left(\begin{array}{ll}
0 & C \\
0 & 0
\end{array}\right).$$

We begin with a simple result on the integrability of $\phi-\mathbb{E}\pp{\phi}$ in the cases presented at the beginning.
\begin{prop}\label{Prop1}
1) For $p \in [1,\infty],\ and\ \xi \in \mathbb{L}^p_{\mathcal{F}_T}\pr{\Omega;H}$, one has $\xi-\mathbb{E}\pp{\xi} \in \mathbb{L}^p_{\mathcal{F}_T}\pr{\Omega;H}$;\\
2) For $p,\ q \in [1,\infty]$,\\
\indent \quad (a) Whenever $\phi\in\mathbb{L}^p_{\mathbb{F}}\pr{\Omega;\mathbb{L}^q\pr{\pp{0,T};H}}$, it follows that
$$\phi-\mathbb{E}\pp{\phi}\in\mathbb{L}^p_{\mathbb{F}}\pr{\Omega;\mathbb{L}^{min\{p,q\}}\pr{\pp{0,T};H}}. $$
\indent \quad (b) Whenever $\phi\in\mathbb{L}^q_{\mathbb{F}}\pr{\pp{0,T};\mathbb{L}^p\pr{\Omega;H}}$, it follows that
$$\phi-\mathbb{E}\pp{\phi}\in\mathbb{L}^q_{\mathbb{F}}\pr{\pp{0,T};\mathbb{L}^p\pr{\Omega;H}}.$$
\indent \quad (c) Whenever $v\in\mathcal{U}^{p}\pr{\pp{0,T}}$ (resp. $v\in\mathcal{U}^{p}_{r}\pr{\pp{0,T}}$), then
$$v-\mathbb{E}\pp{v}\in\mathcal{U}^{p}\pr{\pp{0,T}} (\text{resp. }v-\mathbb{E}\pp{v}\in\mathcal{U}^{p}_{r}\pr{\pp{0,T}}).$$
\end{prop}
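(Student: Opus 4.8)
The plan is to prove each item by unwinding the definitions of the spaces and using the elementary contraction property of conditional expectation (here, the ordinary expectation) together with the triangle inequality. Throughout I will use that for any $\phi \in \mathbb{L}^r(\Omega;H)$ with $r \in [1,\infty]$ one has $\norm{\mathbb{E}[\phi]}_H \leq \mathbb{E}[\norm{\phi}_H] \leq \norm{\phi}_{\mathbb{L}^r(\Omega;H)}$ (the last by Jensen/H\"older when $r<\infty$, and trivially when $r=\infty$), so that $\phi \mapsto \phi - \mathbb{E}[\phi]$ is a bounded linear operator on $\mathbb{L}^r(\Omega;H)$ with norm at most $2$; this immediately takes care of part 1) (with $r=p$, noting $\mathbb{E}[\xi]$ is a deterministic element of $H$, hence $\mathcal{F}_T$-measurable and $p$-integrable) and the $\Omega$-layer of parts 2)(a), 2)(b).

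For 2)(b), since the inner space $\mathbb{L}^p(\Omega;H)$ is fixed and the operator $\phi(t,\cdot) \mapsto \phi(t,\cdot) - \mathbb{E}[\phi(t,\cdot)]$ is bounded on it uniformly in $t$, applying it pointwise in $t$ preserves membership in $\mathbb{L}^q_{\mathbb{F}}([0,T];\mathbb{L}^p(\Omega;H))$; progressive measurability of $\mathbb{E}[\phi(\cdot)]$ follows because $t \mapsto \mathbb{E}[\phi(t,\cdot)]$ is (Bochner) measurable and deterministic, hence adapted. For 2)(a) the only subtlety is the drop of integrability exponent in $t$ from $q$ to $\min\{p,q\}$: if $q \leq p$ there is nothing new, as $\phi - \mathbb{E}[\phi] \in \mathbb{L}^p(\Omega;\mathbb{L}^q([0,T];H))$ directly by the $\Omega$-layer bound. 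If $q > p$, I would bound $\norm{t \mapsto \mathbb{E}[\phi(t,\cdot)]}_{\mathbb{L}^p([0,T];H)}$: by Minkowski's integral inequality, $\big(\int_0^T \norm{\mathbb{E}[\phi(t,\cdot)]}_H^p\,dt\big)^{1/p} \leq \big(\int_0^T (\mathbb{E}[\norm{\phi(t,\cdot)}_H])^p\,dt\big)^{1/p} \leq \mathbb{E}\big[\big(\int_0^T \norm{\phi(t,\cdot)}_H^p\,dt\big)^{1/p}\big]$, and since $q > p$ and $[0,T]$ has finite measure, $\norm{\cdot}_{\mathbb{L}^p([0,T];H)} \leq T^{1/p-1/q}\norm{\cdot}_{\mathbb{L}^q([0,T];H)}$, so the right side is finite because $\phi \in \mathbb{L}^p(\Omega;\mathbb{L}^q([0,T];H))$. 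Hence $\mathbb{E}[\phi] \in \mathbb{L}^p([0,T];H) \subset \mathbb{L}^{\min\{p,q\}}([0,T];H)$, and $\phi \in \mathbb{L}^p(\Omega;\mathbb{L}^q([0,T];H)) \subset \mathbb{L}^p(\Omega;\mathbb{L}^{\min\{p,q\}}([0,T];H))$ as well (finite measure again), so the difference lies in the claimed space.

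For 2)(c), recall $v \in \mathcal{U}^p$ means $B_1 v \in \mathbb{A}^{p,1}(H) = \mathbb{L}^p_{\mathbb{F}}(\Omega;\mathbb{L}^1([0,T];H))$ and $B_2\mathbb{E}[v] \in \mathbb{L}^1([0,T];H)$. Since $B_1$ is a fixed matrix, $B_1(v - \mathbb{E}[v]) = B_1 v - \mathbb{E}[B_1 v]$, and applying part 2)(a) with $q=1$ (so $\min\{p,1\}=1$) gives $B_1 v - \mathbb{E}[B_1 v] \in \mathbb{L}^p_{\mathbb{F}}(\Omega;\mathbb{L}^1([0,T];H)) = \mathbb{A}^{p,1}(H)$. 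For the mean part, $\mathbb{E}[v - \mathbb{E}[v]] = 0$, so $B_2 \mathbb{E}[v - \mathbb{E}[v]] = 0 \in \mathbb{L}^1([0,T];H)$ trivially; hence $v - \mathbb{E}[v] \in \mathcal{U}^p$. The argument for $\mathcal{U}^p_r$ is identical, using part 2)(b) in place of 2)(a) for the $B_1$-term (the defining condition there being $B_1 v \in \mathbb{B}^{1,p}(H) = \mathbb{L}^1_{\mathbb{F}}([0,T];\mathbb{L}^p(\Omega;H))$). The only step requiring genuine care is the exponent drop in 2)(a) when $q>p$; everything else is a bookkeeping exercise in the definitions, so I do not anticipate a real obstacle, only the need to invoke Minkowski's integral inequality and finiteness of $[0,T]$ cleanly.
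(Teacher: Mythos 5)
Your proposal is correct and follows essentially the same route as the paper: part 1) and the two layers of 2)(a)--(b) via the contraction $\norm{\mathbb{E}[\phi]}_H\leq\mathbb{E}[\norm{\phi}_H]$ (Jensen), a case split on $p$ versus $q$ with the finite-measure embedding of $\mathbb{L}^q([0,T];H)$ into $\mathbb{L}^p$, and 2)(c) reduced to 2)(a) (resp.\ 2)(b)) after writing $B_1(v-\mathbb{E}[v])=B_1v-\mathbb{E}[B_1v]$, with the $B_2$-term trivial since $\mathbb{E}[v-\mathbb{E}[v]]=0$. The only cosmetic deviations are your use of Minkowski's integral inequality in the $q>p$ case, where the paper simply applies Jensen pointwise in $t$ followed by H\"older in time, and, in the $q\leq p$ case, the tacit (but standard) identification of the pointwise-in-$t$ expectation $\mathbb{E}[\phi(t,\cdot)]$ with the Bochner mean of $\phi$ viewed as an $\mathbb{L}^q([0,T];H)$-valued random variable; neither affects correctness.
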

\begin{proof}1) One simply writes $\|\mathbb{E}\pp{\xi} \|^{p}_{H}\leq\pr{\mathbb{E}\pp{\|\xi\|_{H}}}^{p}\leq\mathbb{E}\pp{\|\xi\|_{H}^{p}} $ to conclude.\\
2)-(a) Let $p<\infty,q<\infty$ and $\phi\in\mathbb{L}^p_{\mathbb{F}}\pr{\Omega;\mathbb{L}^q\pr{\pp{0,T};H}}$.
If $p\leq q$, then, using H{\"{o}}lder's inequality,
\begin{align*}
\int_0^T\|\mathbb{E}\pp{\phi(t)}\|_{H}^{p}dt\leq &\mathbb{E}\pp{\int_0^T\|\phi(t)\|_{H}^{p}dt} \leq\mathbb{E}\pp{\pr{\int_0^T\|\phi(t)\|_{H}^{q} dt}^\frac{p}{q}}T^{1-\frac{p}{q}}<\infty.
\end{align*}
If $p> q$, then, by the monotonicity of norms,
\begin{align*}
\int_0^T\|\mathbb{E}\pp{\phi(t)}\|_{H}^{q}dt\leq &\mathbb{E}\pp{\int_0^T\|\phi(t)\|_{H}^{q}dt} \leq\pr{\mathbb{E}\pp{\pr{\int_0^T\|\phi(t)\|_{H}^{q}dt}^\frac{p}{q}}}^{\frac{q}{p}}<\infty.
\end{align*}
The conclusion is extended to the cases when $\{p,q\}\ni+\infty$ using the expression of $\mathbb{L}^\infty$ norms as non-decreasing limits of $\mathbb{L}^m$ ones, as $m\rightarrow\infty$.\\
2)-(b) Again, we can reduce our analysis to the cases when $p<\infty,q<\infty$. We let $\phi\in\mathbb{L}^q_{\mathbb{F}}\pr{\pp{0,T};\mathbb{L}^p\pr{\Omega;H}}$. One uses $\|\mathbb{E}\pp{\phi(t)}\|_{H}^{p}\leq\mathbb{E}\pp{\|\phi(t)\|_{H}^{p}}$ to conclude that
$$\int_0^T\pr{\|\mathbb{E}\pp{\phi(t)}\|_{H}^{p}}^{\frac{q}{p}}dt\leq\int_0^T\pr{\mathbb{E}\pp{\|\phi(t)\|_{H}^{p}}}^{\frac{q}{p}}dt<\infty.$$
2)-(c) The reader is reminded that $u(\cdot)\in\mathcal{U}^{p}$ is a progressively measurable control such that
$$B_{1}u(\cdot)\in\mathbb{L}^p_{\mathbb{F}}\pr{\Omega;\mathbb{L}^1\pr{\pp{0,T};\mathbb{R}^{d}}}\ \text{and}\ B_{2}\mathbb{E}\pp{u(\cdot)}\in\mathbb{L}^1\pr{\pp{0,T};\mathbb{R}^{d}}.$$
By assertion 2)-(a),
$$B_{1}(u(\cdot)-\mathbb{E}\pp{u(\cdot)})=B_{1}u(\cdot)-\mathbb{E}\pp{B_{1}u(\cdot)}\in\mathbb{L}^p_{\mathbb{F}}\pr{\Omega;\mathbb{L}^1\pr{\pp{0,T};\mathbb{R}^{d}}},$$
which is equivalent to $u(\cdot)-\mathbb{E}\pp{u(\cdot)}\in\mathcal{U}^{p}$.

The assertion on $\mathcal{U}_{r}^{p}$ is quite similar and based on 2)-(b).
\end{proof}
\begin{remark}
If $\bar{B}$ is of full rank ($2d$), then
$\mathcal{U}^{p}=\mathbb{L}^p_{\mathbb{F}}\pr{\Omega;\mathbb{L}^1\pr{\pp{0,T};\mathbb{R}^{2n}}}$, and similarly, $\mathcal{U}^{p}_{r}=\mathbb{L}^1_{\mathbb{F}}\pr{\pp{0,T};\mathbb{L}^p\pr{\Omega;\mathbb{R}^{2n}}}$.
\end{remark}

The following result shows that, by weakening the regularity of the control processes, we can guarantee the controllability of $\left\{\left(\xi-\mathbb{E}\pp{\xi},\mathbb{E}\pp{\xi}\right),\xi \in \mathbb{L}^p_{\mathcal{F}_T}\pr{\Omega;H}\right\}$.
\begin{lemma}\label{Lem1}
If $Rank(B_{1}+B_{2})=Rank(B_{1})=d$, and $p\geq1$, and $\xi \in \mathbb{L}^p_{\mathcal{F}_T}\pr{\Omega;H}$, then, there exists $u\in\mathbb{L}^1_{\mathbb{F}}\pr{\pp{0,T};\mathbb{L}^p\pr{\Omega;H}}$ such that
$$\int_0^T\bar{B}\left(\begin{array}{c}
u(t)-\mathbb{E}\pp{u(t)} \\
\mathbb{E}\pp{u(t)}
\end{array}\right)dt=\left(\begin{array}{c}
\xi-\mathbb{E}\pp{\xi} \\
\mathbb{E}\pp{\xi}
\end{array}\right).$$
\end{lemma}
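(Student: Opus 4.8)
The plan is to invert the two blocks of $\bar B$ separately, using the rank hypotheses, and then apply Lemma \ref{representation} to each block. Observe first that the displayed identity, written in components, is exactly the pair of requirements
$$\int_0^T B_1\bigl(u(t)-\mathbb{E}\pp{u(t)}\bigr)\,dt=\xi-\mathbb{E}\pp{\xi},\qquad \int_0^T (B_1+B_2)\,\mathbb{E}\pp{u(t)}\,dt=\mathbb{E}\pp{\xi},$$
since the top block of $\bar B$ acts on $u-\mathbb{E}\pp{u}$ by $B_1$ and the bottom block acts on $\mathbb{E}\pp{u}$ by $B_1+B_2$. So I would split the construction of $u$ into a zero-mean part $u^0$ absorbing the first equation and a deterministic part $\bar v$ absorbing the second, setting $u:=u^0+\bar v$; then $u-\mathbb{E}\pp{u}=u^0-\mathbb{E}\pp{u^0}$ and $\mathbb{E}\pp{u}=\mathbb{E}\pp{u^0}+\bar v$, so it is cleanest to first arrange $\mathbb{E}\pp{u^0}=0$ and then pick $\bar v$.

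For the first equation, since $Rank(B_1)=d$, the $d\times n$ matrix $B_1$ admits a right inverse $B_1^\dagger\in\mathbb{R}^{n\times d}$ with $B_1B_1^\dagger=I_d$. Applying Lemma \ref{representation} to $\xi-\mathbb{E}\pp{\xi}\in\mathbb{L}^p_{\mathcal{F}_T}(\Omega;H)$ (which lies in this space by Proposition \ref{Prop1}, part 1), we obtain $\zeta\in\mathbb{L}^1_{\mathbb{F}}\pr{\pp{0,T};\mathbb{L}^p\pr{\Omega;H}}$ with $\int_0^T\zeta(r)\,dr=\xi-\mathbb{E}\pp{\xi}$ a.s.; replacing $\zeta$ by $\zeta-\mathbb{E}\pp{\zeta}$ (still in the same space by Proposition \ref{Prop1}, part 2-(b), and still integrating to $\xi-\mathbb{E}\pp{\xi}$ since the latter has zero mean) we may assume $\mathbb{E}\pp{\zeta}=0$. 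Set $u^0(t):=B_1^\dagger\zeta(t)$; then $u^0\in\mathbb{L}^1_{\mathbb{F}}\pr{\pp{0,T};\mathbb{L}^p\pr{\Omega;H}}$, $\mathbb{E}\pp{u^0}=0$, and $\int_0^T B_1 u^0(t)\,dt=\int_0^T\zeta(t)\,dt=\xi-\mathbb{E}\pp{\xi}$.

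For the second equation, since $Rank(B_1+B_2)=d$ there is a right inverse $(B_1+B_2)^\dagger$; take the constant-in-time control $\bar v(t):=\tfrac1T(B_1+B_2)^\dagger\,\mathbb{E}\pp{\xi}$, a deterministic element of $\mathbb{L}^1\pr{\pp{0,T};H}\subset\mathbb{L}^1_{\mathbb{F}}\pr{\pp{0,T};\mathbb{L}^p\pr{\Omega;H}}$, so that $\int_0^T(B_1+B_2)\bar v(t)\,dt=\mathbb{E}\pp{\xi}$. Finally put $u:=u^0+\bar v$. Then $u\in\mathbb{L}^1_{\mathbb{F}}\pr{\pp{0,T};\mathbb{L}^p\pr{\Omega;H}}$, $u-\mathbb{E}\pp{u}=u^0$, $\mathbb{E}\pp{u}=\bar v$, and plugging into the block form of the target identity verifies both components, which is the claim. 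The only genuinely non-routine ingredient is the use of Lemma \ref{representation} to manufacture a time-integrable $\mathbb{L}^p$-valued process integrating to a prescribed $\mathcal{F}_T$-random variable — everything else is linear algebra (right inverses) plus the elementary integrability bookkeeping already recorded in Proposition \ref{Prop1}; I would double-check only that centering $\zeta$ preserves the right function space, which it does by part 2-(b).
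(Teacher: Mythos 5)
Your proposal is correct and follows essentially the same route as the paper: the paper applies Lemma \ref{representation} to $\xi$ and centers at the moment of defining the control, setting $u(t)=B_1^*(B_1B_1^*)^{-1}\pr{v(t)-\mathbb{E}\pp{v(t)}}+\frac1T(B_1+B_2)^*\pp{(B_1+B_2)(B_1+B_2)^*}^{-1}\mathbb{E}\pp{\xi}$, which is exactly your decomposition $u=u^0+\bar v$ with explicit right inverses. Applying the representation lemma to $\xi-\mathbb{E}\pp{\xi}$ first and then centering $\zeta$ is only a cosmetic reordering, and your integrability bookkeeping via Proposition \ref{Prop1} matches the paper's.
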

\begin{proof}
We recall that $\bar{B}=\left(\begin{array}{cc}
B_1 & 0 \\
0 & B_1+B_2
\end{array}\right).$ Due to Lemma \ref{representation}, there exists $v\in\mathbb{L}^1_{\mathbb{F}}\pr{\pp{0,T};\mathbb{L}^p\pr{\Omega;H}}$, such that
$$\displaystyle\int_0^Tv(t)dt=\xi,\ \mathbb{P}\text{-a.s.}$$
We now define
\begin{align*}u(\cdot)&:=B_{1}^*(B_{1}B_{1}^*)^{-1}(v(\cdot)-\mathbb{E}\pp{v(\cdot)})+\frac{1}{T}(B_{1}+B_{2})^*[(B_{1}+B_{2})(B_{1}+B_{2})^*]^{-1}\mathbb{E}\pp{\xi},\end{align*}
and note that $u(\cdot)\in\mathbb{L}^1_{\mathbb{F}}\pr{\pp{0,T};\mathbb{L}^p\pr{\Omega;H}}$ due to Proposition \ref{Prop1}, assertion 2)-(b) (take also a look at assertion (c) dealing with matrix multiplied controls).\\
We have\\
$$\mathbb{E}\pp{u(t)}=\frac{1}{T}(B_{1}+B_{2})^*[(B_{1}+B_{2})(B_{1}+B_{2})^*]^{-1}\mathbb{E}\pp{\xi}$$
so that
$$\displaystyle\int_0^T(B_{1}+B_{2})\mathbb{E}\pp{u(t)}dt=\mathbb{E}\pp{\xi}.$$
Furthermore, $B_{1}\pr{u(t)-\mathbb{E}\pp{u(t)}}=v(t)-\mathbb{E}\pp{v(t)}$ so that
$$\int_0^TB_{1}\pr{u(t)-\mathbb{E}\pp{u(t)}}dt=\xi-\mathbb{E}\pp{\xi}.$$
The claim is now proven.
\end{proof}

\indent We consider the fundamental matrix equation
\begin{equation}\label{ODE_1}
\left\{\begin{aligned}
&d\mathcal{Y}(t)=-\mathcal{Y}(t)\bar{A}dt,\ t\in[0,T],\\
&\mathcal{Y}(0)=I_{2d},
\end{aligned}\right.
\end{equation}
whose solution is obviously deterministic and belongs to $\mathbb{L}^\infty\pr{\pp{0,T};\mathbb{R}^{d\times d}}$. In particular, note that
$$\mathcal{Y}(t)=\left(\begin{array}{cc}
\mathcal{Y}_1(t):=e^{-t A_1} & 0 \\
0 & \mathcal{Y}_2(t):=e^{-t (A_1+A_2)}
\end{array}\right),\ t\in[0,T],$$ is block-diagonal.
With the notation $$\bar{X}(t):=\left(\begin{array}{c}
X(t) \\
x(t)=\mathbb{E}\pp{X(t)}
\end{array}\right),\ t\in[0,T],$$ the autonomous deterministic system
$$dx(t)=\pp{(A_{1}+A_{2})x(t)+(B_{1}+B_{2})v(t)}dt$$
is exactly controllable to $\mathbb{R}^{d}$-targets (with $v=\mathbb{E}\pp{u}$), which is equivalent to the invertibility of
$$\mathcal{G}_{2}=\displaystyle\int_0^Te^{-t(A_1+A_2)}(B_{1}+B_{2})(B_{1}+B_{2})^*e^{-t(A_1+A_2)^*}dt.$$
\indent We will enforce the following stronger assumption, partly inspired by what happens in the non-mean-field framework.
\begin{ass}\label{Gramian}
We ask
$$\mathcal{G}=\left(\begin{array}{cc}
B_1 & 0 \\
0 & \displaystyle\int_0^T\mathcal{Y}_2(t)(B_{1}+B_{2})(B_{1}+B_{2})^*\mathcal{Y}^*_2(t)dt
\end{array}\right)$$to be invertible, with $\mathcal{Y}_2(t)=e^{-t(A_1+A_2)}$, for $t\geq 0$.
\end{ass}

\indent We can now give the main result of the section providing an affirmative answer to \textbf{Q2}.
\begin{theorem}\label{ThMainQ2}
Under Assumption \ref{Gramian}, for $p>1$ the system \eqref{x} is exactly controllable to $\mathbb{L}^p_{\mathcal{F}_T}\pr{\Omega;\mathbb{R}^{d}}$-targets by using $\mathcal{U}^{p}_{r}\subset\mathcal{U}^{p}$ controls.
\end{theorem}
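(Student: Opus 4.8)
The idea is to decouple \eqref{x} into its mean and fluctuation parts, as in \eqref{Y}. Taking expectations, $y(\cdot):=\mathbb{E}\pp{X(\cdot)}$ solves the \emph{deterministic} linear ODE $\dot y(t)=(A_1+A_2)y(t)+(B_1+B_2)v(t)$, $y(0)=x$, with $v:=\mathbb{E}\pp{u}$, while the fluctuation $Y(\cdot):=X(\cdot)-\mathbb{E}\pp{X(\cdot)}$ solves $dY(t)=\pr{A_1Y(t)+B_1w(t)}dt+Cy(t)dW(t)$, $Y(0)=0$, with $w:=u-\mathbb{E}\pp{u}$ and with diffusion input $Cy(\cdot)$, which is deterministic once $y$ is fixed. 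I would first steer $y$ to $\mathbb{E}\pp{\xi}$ by choosing a suitable deterministic $v$, and then, with $y(\cdot)$ frozen, steer $Y$ to $\xi-\mathbb{E}\pp{\xi}$ by a mean-zero low-regularity $w$; setting $u:=v+w$ and invoking the equivalence between \eqref{x} and \eqref{Y} will give $X^{x,u}(T)=\xi$. For the first part, variation of constants gives $\mathcal{Y}_2(T)y(T)=x+\int_0^T\mathcal{Y}_2(t)(B_1+B_2)v(t)\,dt$, so $y(T)=\mathbb{E}\pp{\xi}$ is attained with the bounded deterministic control $v(t):=(B_1+B_2)^*\mathcal{Y}^*_2(t)\,\mathcal{G}_2^{-1}\pr{\mathcal{Y}_2(T)\mathbb{E}\pp{\xi}-x}$, using that $\mathcal{G}_2$ is invertible by Assumption \ref{Gramian}; this fixes $y(\cdot)\in\mathbb{L}^\infty\pr{\pp{0,T};\mathbb{R}^d}$ once and for all.

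Next, with $y(\cdot)$ as above, variation of constants for $Y$ (with $\mathcal{Y}_1(t)=e^{-tA_1}$) yields
\[
\mathcal{Y}_1(T)Y(T)=\int_0^T\mathcal{Y}_1(t)B_1w(t)\,dt+\int_0^T\mathcal{Y}_1(t)Cy(t)\,dW(t),
\]
so that $Y(T)=\xi-\mathbb{E}\pp{\xi}$ amounts to producing a progressively measurable, mean-zero $w$ with $\int_0^T\mathcal{Y}_1(t)B_1w(t)\,dt=\Xi$, where $\Xi:=\mathcal{Y}_1(T)\pr{\xi-\mathbb{E}\pp{\xi}}-\int_0^T\mathcal{Y}_1(t)Cy(t)\,dW(t)$. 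Since this stochastic integral has a deterministic integrand, it is Gaussian and belongs to every $\mathbb{L}^q$; together with $\xi-\mathbb{E}\pp{\xi}\in\mathbb{L}^p_{\mathcal{F}_T}$ (Proposition \ref{Prop1}) this yields $\Xi\in\mathbb{L}^p_{\mathcal{F}_T}\pr{\Omega;\mathbb{R}^d}$ and $\mathbb{E}\pp{\Xi}=0$. By Lemma \ref{representation} there is $\zeta\in\mathbb{L}^1_{\mathbb{F}}\pr{\pp{0,T};\mathbb{L}^p\pr{\Omega;\mathbb{R}^d}}$ with $\int_0^T\zeta(r)\,dr=\Xi$; passing to $\tilde\zeta:=\zeta-\mathbb{E}\pp{\zeta}$ one still has $\int_0^T\tilde\zeta(r)\,dr=\Xi-\mathbb{E}\pp{\Xi}=\Xi$, now with $\mathbb{E}\pp{\tilde\zeta(r)}=0$ and $\tilde\zeta\in\mathbb{L}^1_{\mathbb{F}}\pr{\pp{0,T};\mathbb{L}^p\pr{\Omega;\mathbb{R}^d}}$ by Proposition \ref{Prop1}, 2)(b). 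Finally take $w(t):=B_1^*(B_1B_1^*)^{-1}\mathcal{Y}_1(t)^{-1}\tilde\zeta(t)$, which is well defined since Assumption \ref{Gramian} forces $B_1$ to have full row rank $d$ (so $B_1B_1^*$ is invertible), is mean-zero, and lies in $\mathbb{L}^1_{\mathbb{F}}\pr{\pp{0,T};\mathbb{L}^p\pr{\Omega;\mathbb{R}^n}}$ because $t\mapsto\mathcal{Y}_1(t)^{-1}$ is bounded (see also Proposition \ref{Prop1}, 2)(c)). Then $\mathcal{Y}_1(t)B_1w(t)=\tilde\zeta(t)$, hence $\int_0^T\mathcal{Y}_1(t)B_1w(t)\,dt=\Xi$, i.e.\ $Y(T)=\xi-\mathbb{E}\pp{\xi}$.

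It remains to assemble the control. Set $u:=v+w$; since $w$ is mean-zero, $\mathbb{E}\pp{u}=v$, so $B_1u\in\mathbb{B}^{1,p}(\mathbb{R}^d)$ and $B_2\mathbb{E}\pp{u}=B_2v\in\mathbb{L}^1\pr{\pp{0,T};\mathbb{R}^d}$, i.e.\ $u\in\mathcal{U}^p_r$ (and $\mathcal{U}^p_r\subset\mathcal{U}^p$ by Minkowski's integral inequality). For $p>1$, the standard well-posedness theory for the linear MFSDE \eqref{x} with inhomogeneous term in $\mathbb{B}^{1,p}$ produces a unique solution $X^{x,u}$ with $X^{x,u}(T)\in\mathbb{L}^p_{\mathcal{F}_T}\pr{\Omega;\mathbb{R}^d}$, whose mean and fluctuation coincide with the $y$ and $Y$ built above; hence $X^{x,u}(T)=Y(T)+y(T)=\pr{\xi-\mathbb{E}\pp{\xi}}+\mathbb{E}\pp{\xi}=\xi$, $\mathbb{P}$-a.s. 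Since $x\in\mathbb{R}^d$ and $\xi\in\mathbb{L}^p_{\mathcal{F}_T}\pr{\Omega;\mathbb{R}^d}$ were arbitrary, this proves exact controllability to $\mathbb{L}^p_{\mathcal{F}_T}\pr{\Omega;\mathbb{R}^d}$-targets using $\mathcal{U}^p_r$-controls.

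The crux of the argument is the second step, where the terminal datum $\Xi$ --- which carries \emph{no} predictable (It\^{o}) regularity --- has to be absorbed into the control; this is possible only after rewriting $\Xi$, via Lemma \ref{representation}, as the time-integral of a process that is merely $\mathbb{L}^1$ in time, and it is exactly this drop in regularity that pushes the controls out of $\mathbb{L}^2_{\mathbb{F}}$ and lets one do without \eqref{suf0}. The delicate bookkeeping point is keeping $w$ centered, so that the mean trajectory fixed in the first step is not disturbed; this works only because $\Xi$ is itself centered. The remaining integrability verifications are immediate from Proposition \ref{Prop1}, and the role of $p>1$ is confined to the well-posedness of the controlled state equation.
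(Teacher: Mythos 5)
Your proof is correct, but it is organized differently from the paper's. The paper routes the target through two mean-field BSDEs --- \eqref{Y_{1}} with terminal value $\xi-\mathbb{E}\pp{\xi}$ and \eqref{Y_{2}} with terminal value $\mathbb{E}\pp{\xi}$ driven by the Gramian-based deterministic control --- and then compensates the resulting martingale integrands $Z_1+Z_2$ through an auxiliary forward equation \eqref{SDEx}, using Lemma \ref{representation} and the right inverse $B_1^*(B_1B_1^*)^{-1}$ to build the centered control; the terminal state is assembled as $Y_1+Y_2+x$. You bypass the BSDEs entirely: exploiting that the diffusion of \eqref{x} is $C\mathbb{E}\pp{X(t)}$, hence deterministic once the mean is fixed, you split into the mean ODE (steered by the same Gramian control, Assumption \ref{Gramian}) and the fluctuation SDE, whose only stochastic contribution is the explicit Gaussian Wiener integral $\int_0^T\mathcal{Y}_1(t)Cy(t)\,dW(t)$; a single application of Lemma \ref{representation} to the centered residue $\Xi$, followed by the same pseudo-inverse of $B_1$ (legitimate, since invertibility of $\mathcal{G}$ encodes full row rank of $B_1$), produces the centered low-regularity control, and keeping it mean-zero preserves the mean trajectory exactly as in the paper's use of $u_2-\mathbb{E}\pp{u_2}$. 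So the essential mechanism (Gramian for the mean, representation lemma plus surjectivity of $B_1$ for the $\mathcal{F}_T$-measurable residue) is shared, but your architecture is more elementary and makes the roles transparent; what it gives up is exactly what the paper's BSDE formulation buys, namely a template that extends to $D_1,D_2\neq 0$ and time-inhomogeneous coefficients (cf.\ the remark following the theorem), where the fluctuation dynamics is no longer driven by a deterministic diffusion and an explicit variation-of-constants resolution is unavailable. Two cosmetic points: your appeal to ``standard well-posedness'' can be avoided altogether by defining $X:=y+Y$ and verifying \eqref{x} directly (both objects are explicit), and the claim $\mathcal{U}^p_r\subset\mathcal{U}^p$ via Minkowski's integral inequality is indeed the correct justification of the inclusion stated in the theorem.
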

\begin{proof}
The idea of the proof is quite simple. In order to get to the final condition, one employs a BSDE and leave aside the control. Second, using the control, one compensates the Brownian part appearing in the BSDE, together with the initial data coming from the same BSDE.

\indent We consider $x_0\in\mathbb{R}^d$, and $\xi\in\mathbb{L}^p_{\mathcal{F}_T}\pr{\Omega;\mathbb{R}^{d}}$. First, we consider the BSDE
\begin{equation}\label{Y_{1}}
\left\{\begin{aligned}
&dY_{1}(t)= \left(A_1 Y_{1}(t)+A_2 \mathbb{E}\pp{Y_{1}(t)}\right)d t +(C \mathbb{E}\pp{Y_{1}(t)}+Z_{1}(t))d W(t),\ t\leq T,\\
&Y_{1}(T)=\xi- \mathbb{E}\pp{\xi}.
\end{aligned}\right.
\end{equation}
It is clear that $\mathbb{E}\pp{Y_{1}}$ satisfies a linear deterministic equation with final data 0, thus $\mathbb{E}\pp{Y_{1}(t)}=0$, for all $t\in[0,T]$. The solution $(Y_{1},Z_{1})\in\mathbb{L}^p_{\mathbb{F}}\pr{\Omega;C\pr{\pp{0,T};\mathbb{R}^{d}}}\times\mathbb{L}^p_{\mathbb{F}}\pr{\Omega;\mathbb{L}^2\pr{\pp{0,T};\mathbb{R}^{d}}}$. Second, we set
\begin{eqs}\label{u}
u_{0}(t):=&-(B_{1}+B_{2})^*\mathcal{Y}^*_2(t)\mathcal{G}_{2}^{-1}(-e^{-T(A_1+A_2)}\mathbb{E}\pp{\xi}+x_0-Y_{1}(0)).
\end{eqs}
Next, we note that $u_{0}$ is deterministic and we consider the BSDE
\begin{equation}\label{Y_{2}}
\left\{\begin{aligned}
&dY_{2}(t)= \left(A_1 Y_{2}(t)+A_2 \mathbb{E}\pp{Y_{2}(t)}+B_1 u_{0}(t)+B_2 \mathbb{E}\pp{u_{0}(t)}\right)d t \\
&\quad \hspace{1.5cm} +(C \mathbb{E}\pp{Y_{2}(t)}+Z_{2}(t))d W(t),\ t\in[0,T],\\
&Y_2(T)=\mathbb{E}\pp{\xi}.
\end{aligned}\right.
\end{equation}
Notice that $\mathbb{E}\pp{u_{0}(t)}=u_{0}(t)$ (since $u_{0}(t)$ is deterministic). However, we write the equation like this to make the drift similar to the one in \eqref{x} for the forward equation.
One easily checks that $Y_{2}(0)=\mathbb{E}\pp{Y_{2}(0)}$ is given by
\begin{equation}\label{Y_{2}(0)}\begin{split}
Y_{2}(0)&=\mathcal{Y}_2(T)\mathbb{E}\pp{\xi}-\displaystyle\int_0^T\mathcal{Y}_2(t)(B_{1}+B_{2})u_{0}(t)dt=x_0-Y_{1}(0).
\end{split}
\end{equation}
To end the proof, we need to compensate the $Z$ terms. For this, we proceed as follows. We consider the fundamental matrix equation
\begin{equation}
\left\{\begin{aligned}
&d\Phi(t)=A_{1}\Phi(t)dt,\ t\in[0,T],\\
&\Phi(0)=I_{d},
\end{aligned}\right.
\end{equation}
whose inverse satisfies
\begin{equation}
\left\{\begin{aligned}
&d\Psi(t)=-\Psi(t)A_{1}dt,\ t\in[0,T],\\
&\Psi(0)=I_{d}.
\end{aligned}\right.
\end{equation}
Both solutions belong to $C\pr{[0,T];\mathbb{R}^{d\times d}}$. The solution to
\begin{equation}\label{x(t)}\begin{cases}
&dx(t)=\pr{A_{1}x(t)+B_{1}u(t)}dt-\pr{Z_{1}(t)+Z_{2}(t)}dW(t),\ t\geq 0,\\&x(0)=0,\end{cases}
\end{equation}
is given by
$$x(t)=\Phi(t)\int_0^t\Psi(s)B_{1}u(s)ds-\Phi(t)\int_0^t\Psi(s)\pr{Z_{1}(s)+Z_{2}(s)}dW(s).$$
Using Lemma \ref{representation} with
$$\displaystyle\int_0^T\Psi(s)\pr{Z_{1}(s)+Z_{2}(s)}dW(s)\in\mathbb{L}^p_{\mathcal{F}_T}\pr{\Omega;\mathbb{R}^{d}},$$
we see that there exists $v(\cdot)\in\mathbb{L}^1_{\mathbb{F}}\pr{\pp{0,T};\mathbb{L}^p\pr{\Omega;\mathbb{R}^{d}}}$,
such that
$$
\int_0^Tv(t)dt=\int_0^T\Psi(s)\pr{Z_{1}(s)+Z_{2}(s)}dW(s)\in\mathbb{L}^p_{\mathcal{F}_T}\pr{\Omega;\mathbb{R}^{d}}, \ \mathbb{P}\text{-a.s.}
$$
Note that $v(\cdot)-\mathbb{E}\pp{v(\cdot)}$ satisfies the same equality and belongs to $\mathbb{L}^1_{\mathbb{F}}\pr{\pp{0,T};\mathbb{L}^p\pr{\Omega;\mathbb{R}^{d}}}$. We set
$$u_{2}(t):=B_{1}^*(B_{1}B_{1}^*)^{-1}\Phi(t)v(t),$$
to get a process belonging to $\mathbb{L}^1_{\mathbb{F}}\pr{\pp{0,T};\mathbb{L}^p\pr{\Omega;\mathbb{R}^{n}}}$ and such that the solution of \eqref{x(t)} associated to $u=u_{2}-\mathbb{E}\pp{u_{2}}$ satisfies
$$
\left\{\begin{aligned}
&dx(t)= \left(A_1 x(t)+B_1(u_{2}(t)-\mathbb{E}\pp{u_{2}(t)})\right)dt\\&\quad\quad\quad\quad\quad-\pr{Z_{1}(t)+Z_{2}(t)}dW(t),\ t\in[0,T],\\
&x(0)=0,\ x(T)=0,\ \mathbb{E}\pp{x(t)}=0,\ \forall t\in[0,T],
\end{aligned}\right.
$$
or, again,
\begin{equation}\label{SDEx}
\left\{\begin{aligned}
&dx(t)=\left(A_1 x(t)+A_2\mathbb{E}\pp{x(t)}+ B_1(u_{2}(t)-\mathbb{E}\pp{u_{2}(t)})\right)dt\\
&\quad \hspace{1cm} +\left(C\mathbb{E}\pp{x(t)}-Z_{1}(t)-Z_{2}(t)\right)dW(t),\ t\in[0,T],\\
&x(0)=0,\ x(T)=0.
\end{aligned}\right.
\end{equation}
It follows from \eqref{Y_{1}}, \eqref{Y_{2}} and \eqref{SDEx} that $\mathcal{X}:=Y_{1}+Y_{2}+x$ obeys the equation
\begin{equation}\notag
\left\{\begin{aligned}
&d\mathcal{X}(t)= \left(A_1 \mathcal{X}(t)+A_2\mathbb{E}\pp{\mathcal{X}(t)}+B_1u(t)+B_2\mathbb{E}\pp{u(t)}\right)dt\\
&\quad \hspace{0.8cm} +C\mathbb{E}\pp{\mathcal{X}(t)}dW(t),\ t\in[0,T],\\
&\mathcal{X}(0)=x_0,\ \mathcal{X}(T)=\xi,
\end{aligned}\right.
\end{equation}
with $u=u_{2}-\mathbb{E}\pp{u_{2}}+u_{0}\in\mathcal{U}^{p}_{r}$. Our proof is complete.
\end{proof}
\begin{remark}
This result can be generalized, under further assumptions, to deal with $D_1,D_2\neq 0$ and time-inhomogeneous coefficients, in the spirit of \cite[Theorem 20]{goreac2021improved}.
\end{remark}
\section{\textbf{Q3.} Exact terminal controllability to normal laws}\label{Sec5}
Let us now focus on the case when $D_1=0$, thus voiding any attempt for exact controllability with $\mathbb{L}^2$-controls, and $B_1=0$, thus failing to comply with Assumption \ref{Gramian} and voiding our attempt for exact controllability with $\mathbb{L}^p$-controls. We further assume that the Brownian filtration is enriched with a large enough $\mathcal{F}_0$ capable of supporting all the laws in the 2-Wasserstein space $\mathcal{P}_2(\mathbb{R}^d)$.
\subsection{What can one do with first-order moments?}\label{Sec5a}
Let us consider the scalar case, $d=1$ and consider the scalar equation.
\begin{equation}\label{SDEd=1}
dx(t)=\pp{a_1x(t)+a_2\mathbb{E}\pp{x(t)}+b\mathbb{E}\pp{u(t)}} dt+\delta\mathbb{E}\pp{u(t)} dW(t),\ x(0)=x_0 \in \mathbb{R}.
\end{equation}
When $\delta\neq 0$, the extra term $C\mathbb{E}\pp{x(t)}$ in the diffusion coefficient in the equation \eqref{SDE1} can be dealt with by setting $v:=c_2\delta^{-1}x+u$ and changing the drift coefficient $a_2$ accordingly.\\
\indent A simple glance at \eqref{SDEd=1} shows that $\mathbb{P}_{x(t)}$ is a Gaussian law; alternatively, the reader may take a look at \cite[Section 3.2]{BGL_2023}. On the other hand, Gaussian laws are completely determined by their first and second order moments. Now $x_1(t):=\mathbb{E}\pp{x(t)}$ follows a simple equation $$dx_1(t)=\pp{(a_1+a_2)x_1(t)+bv(t)}dt,\ x_1(0)=x_0 \in \mathbb{R},$$with $v(t):=\mathbb{E}\pp{u(t)}$, an equation
which is exactly/exactly null/approximately controllable if and only if $b\neq 0$. This equation is, however, exactly terminal controllable even if $b=0$ (just take any $v$, then pick the initial condition $x$ accordingly). Now we assume $b\neq 0$.\\
Furthermore, the second order moment $x_2(t):=\mathbb{E}\pp{\abs{x(t)}^2}$ satisfies
$$dx_2(t)=\pr{2a_1x_2(t)+2a_2x_1^2(t)+2bx_1(t)v(t)+\delta^2v^2(t)} dt,$$while
$$dx_1^2(t)=\pp{2(a_1+a_2)x_1^2(t)+2bx_1(t)v(t)} dt,$$so that the variance satisfies
$$dVar(x(t))=\pr{2a_1 Var(x(t))+\delta^2v^2(t)} dt.$$
It follows that, starting from a deterministic initial value $x(0)$,
$$Var(x(T))=\delta^2\displaystyle\int_0^Te^{2a_1(T-t)}v^2(t) dt.$$
Let us consider $x_0\in\mathbb{R}$ and see that, for any $\zeta\in\mathbb{R}$,\begin{equation}\label{v}v(t):=e^{-(a_1+a_2)(T-t)}\pp{\frac{\alpha-x_0e^{(a_1+a_2)T}}{Tb}+\zeta\pr{\mathbf{1}_{[\frac{T}{2},T]}(t)-\mathbf{1}_{[0,\frac{T}{2})}(t)}}\end{equation} controls $x_1(t)\left(=\mathbb{E}\pp{x(t)}\right)$ from $x_0$ to $\alpha$. One has
\begin{align*}Var(x(T))=\delta^2\psi(T)\Big[&\pr{\frac{\alpha-x_0e^{(a_1+a_2)T}}{Tb}-\zeta}^2+e^{a_2T}\pr{\frac{\alpha-x_0e^{(a_1+a_2)T}}{Tb}+\zeta}^2\Big],\\
\textnormal{where } \psi(T)&:=\begin{cases}\frac{e^{-a_2T}-e^{-2a_2T}}{2a_2},&\textnormal{ if } a_2\neq 0,\\
\frac{T}{2},&\textnormal{ otherwise}.\end{cases}\end{align*}
It follows that, starting from $x_0\in\mathbb{R}$, and using the aforementioned control family $v$ defined in \eqref{v}, at time $T$, one can obtain all normal laws $\mathcal{N}\pr{\alpha,\beta^2}$, for all $$\beta^2\geq \delta^2\psi(T)\frac{4}{1+e^{-a_2T}}\pr{\frac{\alpha-x_0e^{(a_1+a_2)T}}{Tb}}^2.$$In particular, one covers all $\beta^2\geq 0$ when starting at $x_0:=\alpha e^{-(a_1+a_2)T}$. This is also true when $b=0$, since the associated term containing $\frac{\alpha-x_0e^{(a_1+a_2)T}}{Tb}$ can be set to $0$ as well.
We have proven the following definition-characterization for the case $d=1$.
\begin{theorem}[Theorem-Definition]
When $D_1=B_1=0_{d\times n}$, the reachable set at time $T$ of \eqref{SDE1} in law is included in the family of normal distributions, i.e.,
\begin{equation}\begin{split}
\label{ReachLaw}
&\mathcal{R}each^\mathbb{P}_T(x):=\set{\mathbb{P}_{X^{x,u}(T)}\ :\ u\in\mathbb{L}^2_{\mathbb{F}}\pr{\Omega\times\pp{0,T};\mathbb{R}^n}}\\&\quad\quad \subset\mathcal{G}auss:=\set{\mathcal{N}(\alpha,\Sigma)\ : \alpha\in\mathbb{R}^d,\ \Sigma\in\mathcal{S}_+^d},
\end{split}
\end{equation}where $X^{x,u}$ is the solution of SDE \eqref{SDE1} with initial value $x\in\mathbb{R}$ (see also \cite[Section 3.2]{BGL_2023}). Here, $\mathcal{S}_+^d$ denotes the set of positive semidefinite matrices in $\mathbb{R}^{d\times d}$.\\
The equation \eqref{SDE1} is said to be \emph{exactly terminal controllable to normal laws} (ETCNL) if $\underset{x\in\mathbb{R}^d}{\cup}\mathcal{R}each^\mathbb{P}_T(x)=\mathcal{G}auss.$
For $d=1$, the system \eqref{SDEd=1} is ETCNL if and only if $\delta\neq 0$.
\end{theorem}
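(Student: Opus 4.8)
The plan is to prove the two assertions in turn. The inclusion \eqref{ReachLaw} will follow from the observation that, when $D_1=B_1=0$, the control enters \eqref{SDE1} only through $\mathbb{E}[u(\cdot)]$, a deterministic function of time; for the equivalence I will rely on the moment computations carried out above in this subsection and merely assemble them.

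For the inclusion, I would fix $x\in\mathbb{R}^d$, $u\in\mathbb{L}^2_{\mathbb{F}}(\Omega\times[0,T];\mathbb{R}^n)$, and write $\bar m(t):=\mathbb{E}[X(t)]$ and $\bar w(t):=\mathbb{E}[u(t)]$; both are deterministic functions of $t$, the former because it is an expectation, the latter lying in $\mathbb{L}^2([0,T];\mathbb{R}^n)$ by Jensen's inequality and Fubini. With $D_1=B_1=0$, \eqref{SDE1} then becomes $dX(t)=(A_1X(t)+A_2\bar m(t)+B_2\bar w(t))dt+\sigma(t)dW(t)$, $X(0)=x$, with $\sigma(t):=C\bar m(t)+D_2\bar w(t)$, i.e. a linear SDE with deterministic coefficients, deterministic initial datum, and additive noise of deterministic intensity. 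Variation of constants gives $X(T)=e^{TA_1}x+\int_0^Te^{(T-s)A_1}(A_2\bar m(s)+B_2\bar w(s))ds+\int_0^Te^{(T-s)A_1}\sigma(s)dW(s)$; the last term, a Wiener integral of a deterministic integrand against the scalar $W$, is a Gaussian vector, so $\mathbb{P}_{X(T)}=\mathcal{N}(m,Q)$ with $Q=\int_0^Te^{(T-s)A_1}\sigma(s)\sigma(s)^*e^{(T-s)A_1^*}ds\in\mathcal{S}_+^d$. The same argument applies to the scalar equation \eqref{SDEd=1}, whose reachable laws are therefore Gaussian too.

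For the ``if'' direction, assume $\delta\neq0$. Since the reachable laws are Gaussian, hence determined by mean and variance, it suffices to realize an arbitrary $(\alpha,\beta^2)\in\mathbb{R}\times[0,\infty)$ as $(\mathbb{E}[x(T)],Var(x(T)))$ using a deterministic initial point $x_0\in\mathbb{R}$ and a deterministic control $u=v$. From the preceding computations, $x_1(t):=\mathbb{E}[x(t)]$ solves $\dot x_1=(a_1+a_2)x_1+bv$, while $\frac{d}{dt}Var(x(t))=2a_1 Var(x(t))+\delta^2v(t)^2$, so with deterministic $x_0$ one has $Var(x(T))=\delta^2\int_0^Te^{2a_1(T-t)}v(t)^2dt$. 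Taking $x_0:=\alpha e^{-(a_1+a_2)T}$ and the control family $v$ of \eqref{v}, the first-moment equation carries $x_1$ to $\alpha$, the mean-bias term $(\alpha-x_0e^{(a_1+a_2)T})/(Tb)$ vanishes, and the variance formula above collapses to $Var(x(T))=\delta^2\psi(T)(1+e^{a_2T})\zeta^2$, which runs through all of $[0,\infty)$ as $\zeta$ varies over $\mathbb{R}$ (here $\psi(T)>0$ and $\delta\neq0$ are used); the case $b=0$ is identical, that same $x_0$ killing the $1/(Tb)$ term. Hence $\bigcup_{x_0\in\mathbb{R}}\mathcal{R}each^\mathbb{P}_T(x_0)=\mathcal{G}auss$, i.e. \eqref{SDEd=1} is ETCNL.

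For the ``only if'' direction, assume $\delta=0$, so that \eqref{SDEd=1} loses its diffusion term, $dx(t)=(a_1x(t)+a_2\mathbb{E}[x(t)]+b\mathbb{E}[u(t)])dt$. Taking expectations shows $\mathbb{E}[x(\cdot)]$ is deterministic, and $x(t)-\mathbb{E}[x(t)]$ then solves $\dot y=a_1y$ with $y(0)=x_0-x_0=0$, whence $x(t)\equiv\mathbb{E}[x(t)]$ is deterministic for every $t$ and every admissible $u$; thus every reachable law is a Dirac mass and $\bigcup_{x_0\in\mathbb{R}}\mathcal{R}each^\mathbb{P}_T(x_0)$ is a proper subset of $\mathcal{G}auss$, omitting $\mathcal{N}(\alpha,\beta^2)$ for every $\beta^2>0$, so \eqref{SDEd=1} is not ETCNL. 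I expect the only genuinely delicate point to be the simultaneous control of the first two moments when $\delta\neq0$: the device is to spend the freedom in $x_0$ to cancel the mean-correction and then use the sign-changing profile $\mathbf{1}_{[T/2,T]}-\mathbf{1}_{[0,T/2)}$ to dial the variance to any prescribed nonnegative level without perturbing the already-attained mean.
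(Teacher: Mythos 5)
Your proposal is correct and follows essentially the same route as the paper: Gaussianity of $X(T)$ comes from the fact that only $\mathbb{E}[u]$ enters the dynamics so the noise intensity is deterministic, and ETCNL for $\delta\neq 0$ is obtained from the moment equations with the control family \eqref{v} and the starting point $x_0=\alpha e^{-(a_1+a_2)T}$. The only additions are that you spell out the variation-of-constants argument for the inclusion (which the paper delegates to \cite[Section 3.2]{BGL_2023}) and the elementary $\delta=0$ converse, which the paper leaves implicit; both are fine.
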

\subsection{ETCNL in the Multi-dimensional Case}\label{Sec5b}
We are now dealing with the equation
\begin{equation}\label{SDE1'0}
\left\{\begin{aligned}
dX(t)=&\pr{A_1X(t)+A_2\mathbb{E}\pp{X(t)}+B_2\mathbb{E}\pp{u(t)}}dt\\
&\ \ +\pr{C\mathbb{E}\pp{X(t)}+D_2\mathbb{E}\pp{u(t)}}dW(t),\ t\in[0,T],\\
X(0)=&X_0\in \mathbb{R}^d.
\end{aligned}\right.
\end{equation}
We have the following theorem characterizing the ETCNL and providing an affirmative answer to \textbf{Q3}.
\begin{theorem}\label{ThMainQ3}
1) If $Range(C)\subset Range(D_2)$ and the system \eqref{SDE1'0} is exactly terminal controllable to normal laws, then \begin{eqs}\label{rankNETCL}
\ Rank\pr{\pp{D_2\ \vdots\ A_1D_2\ \vdots\ \ldots \ \vdots\ A_1^{d-1}D_2}}&=d.\end{eqs}
\indent 2) If one enforces the slightly stronger condition $Rank(D_2)=d$, then the system \eqref{SDE1'0} is exactly terminal controllable to normal laws.
\end{theorem}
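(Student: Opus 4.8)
My plan is to reduce the problem, exactly as in the scalar case of Section~\ref{Sec5a}, to a computation on the mean and the covariance of $X^{x,u}(T)$. The first observation is that, since the only randomness entering the diffusion coefficient of \eqref{SDE1'0} is through the deterministic quantities $\mathbb{E}[X(t)]$ and $\mathbb{E}[u(t)]$, and since $\mathcal{F}_0$ is rich enough to carry any initial law, one may as well take $X_0$ deterministic; then $\mathbb{P}_{X^{x,u}(T)}$ is automatically Gaussian, so $\mathcal{R}each^\mathbb{P}_T(x)\subset\mathcal{G}auss$, and ETCNL amounts to prescribing arbitrarily the pair $(\alpha,\Sigma)\in\mathbb{R}^d\times\mathcal{S}^d_+$ of mean and covariance at time $T$. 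Writing $m(t):=\mathbb{E}[X(t)]$ and $v(t):=\mathbb{E}[u(t)]$, one gets the linear ODE $\dot m(t)=(A_1+A_2)m(t)+C m(t)\cdot 0+\ldots$ — more precisely $\dot m(t)=(A_1+A_2)m(t)+B_2 v(t)$ — while the centered part $Y(t):=X(t)-m(t)$ solves $dY(t)=A_1Y(t)\,dt+\big(C m(t)+D_2 v(t)\big)\,dW(t)$, a linear SDE driven by a \emph{deterministic} integrand $\sigma(t):=Cm(t)+D_2v(t)\in\mathbb{R}^d$. Hence $\mathrm{Cov}(X(T))=\Sigma(T)$ where $\Sigma$ solves the matrix ODE $\dot\Sigma(t)=A_1\Sigma(t)+\Sigma(t)A_1^*+\sigma(t)\sigma(t)^*$, $\Sigma(0)=0$, so that with $e^{tA_1}$ the fundamental solution,
\begin{equation}\notag
\Sigma(T)=\int_0^T e^{(T-t)A_1}\sigma(t)\sigma(t)^* e^{(T-t)A_1^*}\,dt.
\end{equation}

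\textbf{Part 2 (sufficiency of $Rank(D_2)=d$).} Here I would argue directly. Given any target $(\alpha,\Sigma)$, first pick any smooth deterministic curve $t\mapsto\sigma(t)\in\mathbb{R}^d$ with $\int_0^T e^{(T-t)A_1}\sigma(t)\sigma(t)^*e^{(T-t)A_1^*}\,dt=\Sigma$; this is possible because the map $\sigma(\cdot)\mapsto\Sigma(T)$ is onto $\mathcal{S}^d_+$ — e.g.\ decompose $\Sigma=\sum_{i=1}^d\lambda_i w_iw_i^*$ and allocate disjoint time subintervals, on each of which $\sigma$ points (after transport by $e^{-(T-t)A_1}$) along a fixed $w_i$ with a constant modulus chosen to match $\lambda_i$, exactly the two-bump trick used for \eqref{v}. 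Since $Rank(D_2)=d$, $D_2D_2^*$ is invertible and we may then \emph{define} $v(t):=D_2^*(D_2D_2^*)^{-1}\big(\sigma(t)-Cm(t)\big)$, where $m$ is obtained by solving forward the mean ODE with this $v$ — note this is a fixed-point identity in $(m,v)$, but it is trivially solvable: substitute $v$ into $\dot m=(A_1+A_2)m+B_2v$ to get a \emph{linear} ODE for $m$ with bounded coefficients, solve it, then read off $v$; finally adjust the initial condition $x$ so that $m(T)=\alpha$ (solve the mean ODE backward, or exploit that for a linear ODE the endpoint map $x\mapsto m(T)$ is an affine bijection). This produces a control $u$ with $\mathbb{E}[u]=v$ (e.g.\ $u\equiv v$) steering the system to $\mathcal{N}(\alpha,\Sigma)$.

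\textbf{Part 1 (necessity of the Kalman rank condition).} Under $Range(C)\subset Range(D_2)$, write $C=D_2 M$ for some $M$; then $\sigma(t)=D_2(Mm(t)+v(t))=D_2\tilde v(t)$ with $\tilde v:=Mm+v$ an arbitrary measurable $\mathbb{R}^n$-valued curve (since $v$ is free), so $\sigma$ ranges over all of $\mathbb{L}^2([0,T];\,\mathrm{Range}(D_2))$. Suppose the Kalman matrix $[D_2\,\vdots\,A_1D_2\,\vdots\,\cdots\,\vdots\,A_1^{d-1}D_2]$ has rank $<d$; by Cayley--Hamilton its column space $V:=\mathrm{span}\{A_1^k D_2 e_j\}$ is a proper $A_1$-invariant subspace containing $\mathrm{Range}(D_2)$. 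Then for every $t$, $e^{(T-t)A_1}\sigma(t)\in V$, hence $\Sigma(T)=\int_0^T(\cdots)\,dt$ has range contained in $V\subsetneq\mathbb{R}^d$, so $\Sigma(T)$ is never positive \emph{definite}; in particular no target $\mathcal{N}(\alpha,I_d)$ is reachable, contradicting ETCNL. I expect the only genuinely delicate point to be the surjectivity-onto-$\mathcal{S}^d_+$ claim in Part~2 — i.e.\ making precise that arbitrary positive semidefinite $\Sigma$ (not just positive definite) can be written in the required integral form with a square-integrable deterministic $\sigma$ — but the disjoint-interval/rank-one-allocation construction handles it cleanly, and the rest is routine linear ODE/SDE bookkeeping.
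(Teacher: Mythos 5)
Your proposal is correct and follows essentially the same route as the paper: derive the ODEs for the mean and the covariance, use the variation-of-constants formula $Cov(T)=\int_0^T e^{(T-t)A_1}\sigma(t)\sigma(t)^*e^{(T-t)A_1^*}dt$, prove sufficiency by a spectral decomposition of $\Sigma$ with rank-one contributions allocated to disjoint time subintervals (choosing the initial point to fix the mean), and prove necessity via Cayley--Hamilton by exhibiting a direction the covariance can never charge when the Kalman rank condition fails. The only cosmetic differences are that you absorb the $C\mathbb{E}[X]$ term into the control inline instead of first reducing to $C=0$ via $C=D_2N$, and you phrase the obstruction through a proper $A_1$-invariant subspace (ruling out $\mathcal{N}(\alpha,I_d)$) rather than a single unit vector $a$ with $a^*Cov(T)a=0$.
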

\begin{proof}
1) For the necessary condition, we can assume, without loss of generality, that $C=0$ (Indeed, under the range condition, one is able to find a matrix $N\subset \mathbb{R}^{n\times d}$ such that $D_2N=C$ and one changes the deterministic control $u:=\mathbb{E}[u]$ to $v=u+N\mathbb{E}[X]$. The coefficients $A_1, B_2$ and $D_2$ remains unchanged, while $A_2$ becomes $\tilde{A}_2:=A_2-B_2N$).
Reasoning on the expectation of $X$, as we have already done in the one-dimensional case, we have the deterministic equation for $X_1(t)=\mathbb{E}[X(t)]$: $$dX_1(t)=\pp{(A_1+A_2)X_1(t)+B_2v(t)}dt,\ t\geq 0,\ X_1(0)=X_0\in \mathbb{R}^d,$$
where $v(t):=\mathbb{E}[u(t)]$.
By writing the tensor equation, it follows that
\begin{align*}d\pp{X_1X_1^*}(t)=\Big[&(A_1+A_2)\pp{X_1X_1^*}(t)+\pp{X_1X_1^*}(t)(A_1+A_2)^*\\
&+X_1(t)v^*(t)B_2^*+B_2v(t)X_1^*(t)\Big] dt.\end{align*}
Furthermore, if $X_2(t):=\mathbb{E}\pp{X(t)X^*(t)}$, then
\begin{align*}dX_2(t)=&\Big\{A_1X_2(t)+X_2(t)A_1^*+A_2\pp{X_1X_1^*}(t)+\pp{X_1X_1^*}(t)A_2^*\\&\quad \quad +X_1(t)v^*(t)B_2^*+B_2v(t)X_1^*(t)+D_2(vv^*)(t)D_2^*\Big\}dt.\end{align*}
For $Cov(t):=Cov(X(t))=(X_2-X_1X_1^*)(t)$, one gets
$$dCov(t)=\pr{A_1Cov(t)+Cov(t)A_1^*+D_2(vv^*)(t)D_2^*} dt.$$
The variation of constants formula gives
$$Cov(T)=\int_0^Te^{(T-t)A_1}D_2(vv^*)(t)D_2^*e^{(T-t)A_1^*} dt.$$If the rank condition \eqref{rankNETCL} on $(A_1,D_2)$ is not fulfilled, then it is easy to find a unitary vector $a\in\mathbb{R}^d$ that is orthogonal on all $A_1^kD_2$, hence orthogonal on $e^{sA_1}D_2$. Then $a^*Cov(T)a=0$, for all controls $v$ which means that $aa^*$ cannot be obtained as a covariance. We have our contradiction.\\
\indent 2) Similarly, since one requires $Rank(D_2)=d$, one can, once again, assume for \eqref{SDE1'0} that $C=0$. Let us fix $\Sigma\in\mathcal{S}_+^d$. If $\Sigma$ can be reached with a deterministic control $v$ such that $Cov(T)=\Sigma$, then one simply sets, for this control,
$$X_1(0)=x:=e^{-T(A_1+A_2)}\alpha-\displaystyle\int_0^Te^{-t(A_1+A_2)}B_2v(s)ds$$
to have the average $\alpha$ for the expectation of $X(T)$, that is, $\mathbb{E}[X(T)]=X_1(T)=\alpha$. It follows that one only needs to show how $\Sigma$ can be obtained.\\
By the spectral decomposition theorem,
$$\Sigma=\sum_{1\leq k\leq d}\mu_k\zeta_k\zeta^*_k,$$
where $\mu_k$ are the non-negative eigenvalues and $\zeta_k\in\mathbb{R}^d$ the associated eigenvectors. Since $D_2$ is assumed to be of full rank, for every $t\geq 0$, there exists a solution $v_k(t)$ of
$$D_2v_k(t)=\sqrt{\frac{d\mu_k}{T}}e^{(t-T)A_1}\zeta_k,$$with a measurable selection (one can take $v_k(\cdot):=D_2^*\pr{D_2D_2^*}^{-1}\sqrt{\frac{d\mu_k}{T}}e^{(\cdot-T)A_1}\zeta_k$). We define the measurable function
$$v(t):=\sum_{1\leq k\leq d}v_k(t)\mathbf{1}_{[\frac{(k-1)T}{d},\frac{kT}{d})}(t),\ t\geq 0.$$ Then the $Cov(T)$ associated to this control satisfies
\begin{align*}
Cov(T)=&\int_0^Te^{(T-t)A_1}D_2(vv^*)(t)D_2^*e^{(T-t)A_1^*} dt\\
&\hspace{-0.3cm}=\sum_{1\leq k\leq d}\int_{\frac{(k-1)T}{d}}^{\frac{kT}{d}}\frac{d\mu_k}{T}\zeta_k\zeta^*_k dt=\Sigma.
\end{align*}
Our proof is now complete, since we have shown how to obtain $X_T$ such that\\
$\mathbb{P}_{X_T}=\mathcal{N}(\alpha,\Sigma).$
\end{proof}

\begin{remark}
1) If $d=n,\ D_2$ commutes with $A_1$, and $a\neq 0$ is such that $$a^*e^{(T-t)A_1}D_2=0,$$
for some $t$, then
$$a^*e^{(T-s)A_1}D_2=a^*e^{(T-t)A_1}D_2e^{(t-s)A_1}=0,$$
for all $s\geq 0$. Then \eqref{rankNETCL} denies the existence of such $a$. As a consequence, necessarily, $D_2$ must be of full rank.\\
\indent 2) When looking back at our initial assumption of $Range(C)\subset Range(D_2)$, we understand that the condition of full range on $D_2$ makes that this assumption is not a real restriction.\\
\indent 3) The construction in the one-dimensional case is more flexible as it also describes the panel of variances reachable starting from $x$ and using the $\pm 1$ symmetric controls. Such constructions are obviously available for the $d$-dimensional case and give a hint to a question of \emph{normal exact controllability in law} (instead of the weaker \emph{terminal} one).
\end{remark}
\subsection{How Does ETCNL Compare to Other Types of Controllability?}\label{Sec5c}

The short answer: It is a \textbf{completely new concept}.\\

\indent \textbf{(I)} While it is implied by exact (terminal) controllability in $\mathbb{L}^2_{\mathcal{F}_T}\pr{\Omega;\mathbb{R}^d}$, it is a class disjoint from the approximate controllability concept in $\mathbb{L}^2_{\mathcal{F}_T}\pr{\Omega;\mathbb{R}^d}$.
Indeed, since $\mathcal{G}auss$ is closed with respect to the weak-* convergence of probability measures, if one thinks about Wasserstein metric, for instance, $\mathcal{G}auss$ does not approximate non Gaussian laws. As such it does not imply approximate (terminal) controllability in law (ATCL)\footnote{For the controlled solutions, we have set, in \eqref{ReachLaw}, the reachable set in law to be $\mathcal{R}each^\mathbb{P}_T(x)$. Then, exact terminal controllability in law to $\mu\in\mathcal{P}_{2}(\mathbb{R}^d)$ from $x$ is valid if $\mu\in\mathcal{R}each^\mathbb{P}_T(x).$ By analogy, the concept of approximate terminal controllability in law to $\mu$ starting at $x$
is valid if $\mu\in cl\left(\mathcal{R}each^\mathbb{P}_T(x)\right)$, where the $cl$ is the Kuratowski closure operator w.r.t. the Wasserstein distance $W_{2}$. In other words, $\mu$ is ATCL $\mu\in cl\left(\cup_{x\in\mathbb{R}^d}\mathcal{R}each^\mathbb{P}_T(x)\right)$.}. Hence, it cannot imply approximate controllability in $\mathbb{L}^2_{\mathcal{F}_T}\pr{\Omega;\mathbb{R}^d}$.\\
\indent \textbf{(II)} The 1-dimensional dynamics $dx(t)=\mathbb{E}\pp{u(t)}dt$ provides an example of a system that is approximately controllable to $0$ in $\mathbb{L}^2_{\mathcal{F}_T}\pr{\Omega;\mathbb{R}^d}$, yet it is not ETCNL.\\
\indent\textbf{(III)} A system may be ETCNL, but failing to be approximate null-controllable (although, obviously, it will be exactly terminal null-controllable).
\begin{example}\label{ExpNoApp0Ctrl}
We consider the one dimensional framework, and deal with the system \eqref{SDE1'0} with $D_2=D\neq 0$. We further assume $B_2=B\neq 0$ (which is the condition for controllability of the deterministic part, i.e., $\mathbb{E}[X(t)]$). We will show that, in this case, even though the system is ETCNL it is not approximately null-controllable.\\
We have
$$Var(T):=Var(X(T))=D^2\displaystyle\int_0^Te^{2A_1(T-t)}v^2(t)dt,$$
and
$$\mathbb{E}\pp{X(T)}=e^{(A_1+A_2)T}x+B\displaystyle\int_0^Te^{(A_1+A_2)(T-t)}v(t) dt.$$
Assuming that arbitrarily small neighborhoods of $0$ can be reached in $\mathbb{L}^2_{\mathcal{F}_T}\pr{\Omega;\mathbb{R}^d}$ starting from $x$ implies that,
$$Var(T)+\pr{\mathbb{E}\pp{X(T)}}^2=\mathbb{E}\pp{(X(T)-0)^2}$$ can be made arbitrarily small, for every initial $x\in\mathbb{R}$. However, \begin{align*}&\pr{\int_0^Te^{(A_1+A_2)(T-t)}v(t)\ dt}^2\leq \frac{1}{D^2}\int_0^Te^{2A_2(T-t)}\ dtVar(T)=\frac{e^{2A_2T}-1}{2A_2D^2}Var(T),\end{align*}where, if $A_2=0$, we change $\frac{e^{2A_2T}-1}{2A_2}$ to $T$. Then, by letting
$$y:=\displaystyle\int_0^Te^{(A_1+A_2)(T-t)}v(t) dt,$$ and
$$z:=e^{(A_1+A_2)T}x,$$
we have, if $A_2\neq 0$,
\begin{align*}
&Var(T)+\pr{\mathbb{E}\pp{X(T)}}^2\geq z^2+2Bzy+\pr{B^2+\frac{2A_2D^2}{e^{2A_2T}-1}}y^2\\&\indent \quad \hspace{2.3cm} \geq \frac{2A_2D^2}{B^2(e^{2A_2T}-1)+2A_2D^2}e^{2(A_1+A_2)T}x^2.
\end{align*}
When $A_2=0$, similar arguments yield
\begin{align*}
Var(T)+\pr{\mathbb{E}\pp{X(T)}}^2&\geq \frac{1}{B^2T+1}e^{2A_1T}x^2.
\end{align*}
It is clear that the only trajectory approximately controllable to $0$ has to start from $x=0$.
\end{example}
\subsection{A new Wasserstein set-valued BSDE}\label{Sec5d}

Example \ref{ExpNoApp0Ctrl} in dimension $d=1$, and the standard inequality employed to prove absence of approximate null controllability reveal an interesting form of BSDE, provided $Rank(D)=d$:
\begin{equation}\label{BSDE_N}
\left\{\begin{aligned}
dY(t)=&\pr{A_1Y(t)+A_2\mathbb{E}\pp{Y(t)}+BD^*\pr{DD^*}^{-1}z(t)}dt+z(t)dW(t),\ t\in\pp{0,T},\\
\mathbb{P}_{Y(T)}=&\mu.\end{aligned}\right.
\end{equation}
The final datum is a probability law belonging to the 2-Wasserstein space $\mathcal{P}_2$. To our best knowledge, this kind of equations are new. Let us give the following definition.
\begin{definition}
Given $\Gamma\subset \mathcal{G}auss$, a Gaussian set-valued solution of \eqref{BSDE_N} is a family of couples $(Y,z)$ consisting of a continuous adapted process $Y$ and a deterministic Borel measurable $z$, square integrable on $\pp{0,T}$, both with values in $\mathbb{R}^d$, such that\\
\indent 1) $\mathbb{P}_{Y(T)}\in\Gamma$;\\
\indent 2) $Y(0)$ is an $\mathcal{F}_0$-random variable with $\mathbb{P}_{Y(0)}\in \mathcal{G}auss$;\\
\indent 3) $(Y,z)$ satisfies \eqref{BSDE_N}, i.e.,
\begin{align*}
&Y(t)=Y(T)-\int_t^T\pr{A_1Y(s)+A_2\mathbb{E}\pp{Y(s)}+BD^*\pr{DD^*}^{-1}z(s)}ds\\&\indent \quad \hspace{1.5cm}-\int_t^Tz(s)dW(s),\ \mbox {for any}\ t\in\pp{0,T},\ \mathbb{P}\text{-a.s.}
\end{align*}
\end{definition}
\begin{remark}
1) If $Y(t)$ has Gaussian distribution, the same is valid for $Y(s)$ for $s\in \pp{t,T}$ whenever the equation is seen as a progressive equation with $z$ acting as a control.\\
\indent 2) We can alternatively ask $(Y,Z)$ to be a usual solution, without imposing $Z$ to be deterministic, but ask that $\mathbb{P}_{Y(\cdot)}\in\mathcal{G}auss$ (not just the starting and ending points). This is equivalent to imposing $\displaystyle\int_t^sZ(l)dW(l)$ to be Gaussian. Of course, the two definitions are not equivalent as, in this case, $Z$ may not be deterministic; constructions based on the sign $sgn(W)$ are easy to make relying on L\'evy's martingale characterization of Brownian motions.
\end{remark}
It is clear that
$$y(t):=\mathbb{E}\pp{Y(t)}\ \text{and}\ \sigma(t):=
\mathbb{E}\pp{(Y^*Y)(t)}-\pr{\mathbb{E}\pp{Y(t)}}^*\mathbb{E}\pp{Y(t)}$$
are given by
\begin{eqc}
\sigma(s)=e^{-(T-s)A_1}\sigma(T)e^{-(T-s)A_1^*}-\displaystyle\int_s^Te^{-(r-s)A_1}z(r)z(r)^*e^{-(r-s)A_1^*}dr,\\
y(s)=e^{-(T-s)(A_1+A_2)}y(T) -\displaystyle\int_s^Te^{-(r-s)(A_1+A_2)}BD^*\pr{DD^*}^{-1}z(r)dr,\ s\in [0,T].
\end{eqc}
\begin{example}\label{exp2}We consider the simplest equation
\begin{equation}
\left\{\begin{aligned}
&dY(t)=z(t)dW(t),\ t\in [0,1],\\
&\mathbb{P}_{Y(1)}=\mathcal{N}(0,1).\end{aligned}\right.
\end{equation}
\indent 1) $z$ is not unique even when $\mathbb{P}_{Y(0)}$ is fixed. To understand this, one takes $\mathbb{P}_{Y(0)}$\\
\indent \ \ \ $=\delta_0$ and notes that $z=\mathbf{1}_{\pp{0,t_0}}-\mathbf{1}_{(t_0,1]}$ does the job by simply picking $Y(1)=$\\
\indent \ \ \ $2W(t_0)-W(1)$, with $t_0$ arbitrary in $\pp{0,1}$.\\
\indent 2) On the other hand, for every $\sigma\leq 1$ and $0<s_0< T=1$, the solution with\\
\indent \ \ \ $Y(0)=0$, $z=\sqrt{\frac{\sigma}{s_0}}\mathbf{1}_{[0,s_0]}+\sqrt{\frac{1-\sigma}{T-s_0}}\mathbf{1}_{(s_0,1]}$, satisfies $\mathbb{P}_{Y(s_0)}=\mathcal{N}(0,\sigma)$ and\\
\indent \ \ \ $\mathbb{P}_{Y(1)}=\mathcal{N}(0,1)$.
It shows that $\mathbb{P}_{Y(s_0)}$ is not unique.\\
\indent 3) If $Y(0)$ is not required to be deterministic and $\mathcal{F}_0$ is independent of $W$, then\\
\indent \ \ \ any $\mathbb{P}_{Y(0)}=\mathcal{N}(0,\sigma)$ works if $\sigma\leq 1$ by picking $z=\sqrt{\frac{1-\sigma}{T}}$ (recall that $T=1$).

So, the conclusion is that we deal here with a \textbf{Wasserstein-set-valued} BSDE \eqref{BSDE_N}.
\end{example}
Let us give a complete treatment in the one-dimensional framework $d=1$. In this framework, if the coefficients are denoted by $$A_1=a_1,\ A_2=a_2,\ BD^*(DD^*)^{-1}=b,$$ then
\begin{eqc}\label{d=1treatment}
\sigma(s)=e^{-2a_1(T-s)}\sigma(T)-\displaystyle\int_s^Te^{-2a_1(r-s)}z^2(r)dr,\\
y(s)=e^{-(a_1+a_2)(T-s)}y(T)-b\displaystyle\int_s^Te^{-(a_1+a_2)(r-s)}z(r)dr,\ s\in [0,T].
\end{eqc}
\begin{theorem}\label{BSDE_1}
Assume $\mathcal{F}_0$ is rich enough. Given $\mu\in\mathcal{G}auss$ at time $T$, the set of backward reachable laws at time $s< T$ is
\begin{eqs}
\mathcal{Y}^\mu(s):=\Big\{&\mathcal{N}(y,\sigma)\ :\ \sigma \in \pp{0,e^{-2a_1(T-s)}Var(\mu)},\ y\in\pp{y_{\min}(\sigma),y_{\max}(\sigma)}\Big\},
\end{eqs}
where $Var(\mu)=\int_{\mathbb{R}}(r-\int_{\mathbb{R}}s\mu(ds))^2\mu(dr)$ is the variance, and,
\begin{eqs}\label{y}
&y_{\min}(\sigma):=e^{-(a_1+a_2)(T-s)}\displaystyle\int_{\mathbb{R}}r\mu(dr)-\abs{b}\sqrt{f(T-s)}\sqrt{e^{-2a_1(T-s)}\sigma(T)-\sigma},\\
&y_{\max}(\sigma):=e^{-(a_1+a_2)(T-s)}\displaystyle\int_{\mathbb{R}}r\mu(dr)+\abs{b}\sqrt{f(T-s)}\sqrt{e^{-2a_1(T-s)}\sigma(T)-\sigma},
\end{eqs}
where $f(s):=\begin{cases}\frac{1-e^{-2a_2s}}{2a_2},&\textnormal{ if }a_2\neq 0,\\
s,&\textnormal{ otherwise,}\end{cases}$
 $s\in [0,T]$. \end{theorem}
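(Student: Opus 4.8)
The plan is to work entirely at the level of the deterministic pair $(y(\cdot),\sigma(\cdot))$ of mean and variance, since by the preceding discussion a solution $(Y,z)$ with $Y(\cdot)$ Gaussian is completely determined in law by $(y,\sigma)$, which in turn is governed by the explicit formulas \eqref{d=1treatment}. Thus I would first argue the reduction: a law $\mathcal{N}(y_0,\sigma_0)$ is backward reachable at time $s$ from $\mu=\mathcal{N}(\int r\mu(dr),Var(\mu))$ at $T$ if and only if there exists a square-integrable deterministic $z$ on $[s,T]$ such that, plugging $\sigma(T)=Var(\mu)$ and $y(T)=\int r\mu(dr)$ into \eqref{d=1treatment}, one gets $\sigma(s)=\sigma_0$ and $y(s)=y_0$. (The part of $z$ on $[0,s]$ is irrelevant and $\mathcal{F}_0$ being rich enough lets us realize any Gaussian $Y(0)$; continuity and adaptedness of $Y$ are automatic.)

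Next I would characterize the attainable $\sigma_0$. From the first line of \eqref{d=1treatment}, $\sigma_0 = e^{-2a_1(T-s)}Var(\mu) - \int_s^T e^{-2a_1(r-s)}z^2(r)\,dr$. The subtracted integral ranges over $[0,\,e^{-2a_1(T-s)}Var(\mu))$ — it is $0$ when $z\equiv 0$ on $[s,T]$ and can be made arbitrarily large, but must stay below $e^{-2a_1(T-s)}Var(\mu)$ because $\sigma_0\geq 0$ is forced (it is a variance). Hence $\sigma_0\in[0,\,e^{-2a_1(T-s)}Var(\mu)]$, matching the stated range. The real content is then: given a target $\sigma_0$ in this interval, which values of $y_0$ are reachable? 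Write $y_0 = e^{-(a_1+a_2)(T-s)}\int r\mu(dr) - b\int_s^T e^{-(a_1+a_2)(r-s)}z(r)\,dr$, so we must control the linear functional $L(z):=\int_s^T e^{-(a_1+a_2)(r-s)}z(r)\,dr$ subject to the quadratic constraint $\int_s^T e^{-2a_1(r-s)}z^2(r)\,dr = e^{-2a_1(T-s)}Var(\mu)-\sigma_0=:Q\geq 0$.

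The key step is therefore an optimization: maximize and minimize $L(z)$ over $z$ with the prescribed value $Q$ of the weighted $L^2$-energy. By Cauchy–Schwarz in the inner product $\langle g,h\rangle=\int_s^T e^{-2a_1(r-s)}g(r)h(r)\,dr$, writing $e^{-(a_1+a_2)(r-s)} = e^{-2a_1(r-s)}\cdot e^{(a_1-a_2)(r-s)}$ so that $L(z)=\langle e^{(a_1-a_2)(\cdot-s)},z\rangle$, we get $|L(z)|\leq \|e^{(a_1-a_2)(\cdot-s)}\| \cdot \sqrt{Q}$, with equality attainable (the constraint set is a sphere of radius $\sqrt{Q}$, nonempty since $Q\geq 0$, and the extremizer is proportional to $e^{(a_1-a_2)(\cdot-s)}$). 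A direct computation gives $\|e^{(a_1-a_2)(\cdot-s)}\|^2 = \int_s^T e^{-2a_2(r-s)}\,dr = f(T-s)$ with $f$ as defined (treating $a_2=0$ separately). Hence $L(z)$ ranges over the full interval $[-\sqrt{f(T-s)}\sqrt{Q},\,\sqrt{f(T-s)}\sqrt{Q}]$ — and by scaling any intermediate value is hit, since $z\mapsto L(z)$ is continuous on the connected sphere. Substituting $Q=e^{-2a_1(T-s)}Var(\mu)-\sigma_0$ and multiplying by $|b|$ yields exactly $[y_{\min}(\sigma_0),y_{\max}(\sigma_0)]$ in \eqref{y}. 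I would close by noting the two constraints decouple cleanly: one first fixes the energy $Q$ to pin down $\sigma_0$, and then within the sphere of that radius the mean-shift $L(z)$ is free to take any value in the stated symmetric interval, so every pair $(y_0,\sigma_0)$ in $\mathcal{Y}^\mu(s)$ is realized and no others are. The main obstacle — really the only nontrivial point — is being careful that the nonnegativity of the variance $\sigma(\cdot)$ is what produces the upper endpoint $e^{-2a_1(T-s)}Var(\mu)$ of the $\sigma$-range (equivalently $Q\geq 0$), and correspondingly that when $\sigma_0$ hits that upper endpoint $Q=0$ forces $z\equiv 0$ on $[s,T]$ and the mean interval degenerates to the single point $e^{-(a_1+a_2)(T-s)}\int r\mu(dr)$; one should also double-check the endpoint conventions (open vs. closed) in the statement are consistent with attainability, and handle the degenerate sub-cases $a_1=0$, $a_2=0$ by the stated limiting values of $f$.
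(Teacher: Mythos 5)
Your proposal is correct and follows essentially the same route as the paper: reduce to the deterministic mean/variance system \eqref{d=1treatment}, identify the admissible $\sigma$ via nonnegativity of the energy term, and bound the mean shift by the weighted Cauchy--Schwarz inequality with the same extremizers $z\propto e^{(a_1-a_2)(\cdot-s)}$, whose weighted norm gives $f(T-s)$. The only cosmetic difference is how intermediate means are filled in: you invoke continuity of the linear functional on the (connected) energy sphere, whereas the paper exhibits the explicit sign-switching family $z_\alpha$ preserving the energy.
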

\begin{proof}
We sketch the proof only for $b\neq 0$ (the other case is even simpler and follows a similar argument). It is easy to show that from \eqref{d=1treatment} at time $s$, the equation $\sigma(s)=\sigma$ has solutions $z^2$ for every $\sigma\in \pp{0,e^{-2a_1(T-s)}\sigma(T)}$ and only for these values.\\
One notes that
$$f(T-s)\int_s^Te^{-2a_1(r-s)}z^2(r)dr\geq \pr{\int_s^Te^{-(a_1+a_2)(r-s)}z(r)dr}^2.$$
As a simple consequence,
$$\abs{b}\sqrt{f(T-s)\pr{e^{-2a_1(T-s)}\sigma(T)-\sigma(s)}}\geq \abs{e^{-(a_1+a_2)(T-s)}y(T)-y(s)}.$$ For every $\sigma(s)=\sigma\in \pp{0,e^{-2a_1(T-s)}\sigma(T)}$, there are two extremal solutions in the sense of \emph{stochastic ordering} of random variables $y_{\min}(\sigma)$ and $y_{\max}(\sigma)$. That is, $y_{\min}(\sigma)$ and $y_{\max}(\sigma)$ are obtained for $z(r)=\pm sgn(b)ce^{(a_1-a_2)(r-s)}$, respectively, with $c>0$ constant such that
$$e^{-2a_1(T-s)}\sigma(T)-\sigma=\int_s^Te^{-2a_1(r-s)}z^2(r)dr.$$ By setting
$$z_\alpha(r)=\pp{-\mathbf{1}_{[s,s+\alpha (T-s))}+\mathbf{1}_{[s+\alpha (T-s),T]}}(r)ce^{(a_1-a_2)(r-s)},$$ the square norm $\displaystyle\int_s^Te^{-2a_1(r-s)}z_{\alpha}^2(r)dr$ is kept but the associated $y$ covers the whole interval $\pp{y_{\min},y_{\max}}$ where $\alpha$ is from $\pp{0,1}$. The proof is now complete.
\end{proof}
\section{Conclusions}
In this paper, we have completed the approach in \cite{goreac2014controllability} by showing that the joint rank conditions on $D_1$ and $D_1+D_2$ are required for exact controllability with square-integrable controls. When this fails, we have provided the mean-field analogous of the exact controllability with less regular controls from \cite{wang2016exact}, priorly highlighted in \cite{lu2012representation}.
When the required conditions fail both on the drift and on the noise coefficient, we introduce a new notion of controllability in law tailored for mean-field systems. In addition, we introduce a class of Wasserstein set-valued BSDEs and give a complete descrition of the sets of solutions for uni-dimensional systems. The multi-dimensional case is a work-in-progress as are more general notions of controllability in law.


\begin{thebibliography}{99}



\bibitem{buckdahn2011general} R. Buckdahn, B. Djehiche, J. Li. A general stochastic maximum principle for SDEs of mean-field type. \emph{Applied Mathematics \emph{\&} Optimization}, 64(2), 197-216, 2011.

\bibitem{buckdahn2009mean} R. Buckdahn, B. Djehiche, J. Li, S. Peng. Mean-field backward stochastic differential equations: A limit approach. \emph{The Annals of Probability}, 37(4), 1524-1565, 2009.

\bibitem{BGL_2023} R. Buckdahn, D. Goreac, J. Li. On the near-viability property of controlled mean-field flows. \emph{Numerical Algebra, Control and Optimization}, 13(3\&4), 630-663, 2023.

\bibitem{buckdahn2009mean1} R. Buckdahn, J. Li, S. Peng. Mean-field backward stochastic differential equations and related partial differential equations. \emph{Stochastic Processes and their Applications}, 119(10), 3133-3154, 2009.

\bibitem{buckdahn2006characterization} R. Buckdahn, M. Quincampoix, G. Tessitore. A characterization of approximately controllable linear stochastic differential equations. \emph{Stochastic Partial Differential Equations and Applications}, 245, 53-60, 2006.

\bibitem{chen2023exact} C. Chen, Z. Yu. Exact controllability for mean-field type linear game-based control systems. \emph{Applied Mathematics \emph{\&} Optimization}. 90(1), 3, 2024.

\bibitem{goreac2008kalman} D. Goreac. A Kalman-type condition for stochastic approximate controllability. \emph{Comptes Rendus Mathematique}, 346(3-4), 183-188, 2008.

\bibitem{goreac2014controllability} D. Goreac. Controllability properties of linear mean-field stochastic systems. \emph{Stochastic Analysis and Applications}, 32(2), 280-297, 2014.

\bibitem{goreac2021improved} D. Goreac, I. Munteanu. Improved stability for linear spdes using mixed boundary/internal controls. \emph{Systems \emph{\&} Control Letters}, 156, 105024, 2021.

\bibitem{kac1956foundations} M. Kac. Foundations of kinetic theory. \emph{Proceedings of the Third Berkeley Symposium on Mathematical Statistics and Probability}, 3, 171-197, 1956.

\bibitem{kac1959probability} M. Kac. Probability and related topics in physical sciences. \emph{American Mathematical Society}, 1959.

\bibitem{lasry2007mean} J.M. Lasry, P.L. Lions. Mean field games. \emph{Japanese Journal of Mathematics}, 2(1), 229-260, 2007.

\bibitem{lee1967foundations} E.B. Lee, L. Markus. Foundations of optimal control theory. \emph{Wiley New York}, 1967.

\bibitem{lions1988exact} J.L. Lions. Exact controllability, stabilization and perturbations for distributed systems. \emph{SIAM Review}, 30(1), 1-68, 1988.


\bibitem{liu2010controllability} F. Liu, S. Peng. On controllability for stochastic control systems when the coefficient is time-variant. \emph{Journal of Systems Science and Complexity}, 23(2), 270-278, 2010.


\bibitem{lu2012representation} Q. L{\"u}, J. Yong, X. Zhang. Representation of It\^{o} integrals by Lebesgue/Bochner integrals. \emph{Journal of the European Mathematical Society}, 14(6), 1795-1823, 2012.

\bibitem{lu2021mathematical} Q. L{\"u}, X. Zhang. Mathematical control theory for stochastic partial differential equations. \emph{Springer}, 2021.

\bibitem{Peng1994backward} S. Peng. Backward stochastic differential equation and exact controllability of stochastic control systems. \emph{Progress in Natural Science}, 4, 274-284, 1994.

\bibitem{russell1978controllability} D.L. Russell. Controllability and stabilizability theory for linear partial differential equations: recent progress and open questions. \emph{SIAM Review}, 20(4), 639-739, 1978.

\bibitem{wang2016exact} Y. Wang, D. Yang, J. Yong, Z. Yu. Exact controllability of linear stochastic differential equations and related problems. \emph{Mathematical Control and Related Fields}, 7(2), 305-345, 2017.


\bibitem{ye2022exact} W. Ye, Z. Yu. Exact controllability of linear mean-field stochastic systems and observability inequality for mean-field backward stochastic differential equations. \emph{Asian Journal of Control}, 24(1), 237-248, 2022.

\bibitem{yu2021controllability} Z. Yu. Controllability Gramian and Kalman rank condition for mean-field control systems. \emph{ESAIM: Control, Optimisation and Calculus of Variations}, 27, 30, 2021.

\bibitem{zuazua2007controllability} E. Zuazua. Controllability and observability of partial differential equations: some results and open problems. \emph{In Handbook of Differential Equations: Evolutionary Equations, North-Holland}, 3, 527-621, 2007.





\end{thebibliography}
\end{document}